\documentclass[11pt,oneside]{amsart}
\usepackage[utf8]{inputenc}
\usepackage{geometry,amssymb,amsthm,amsmath,
graphics,enumerate}

\usepackage{caption}

\usepackage{times}
\usepackage{amsfonts}
\usepackage{amssymb}
\usepackage{amscd}
\usepackage{url}
\usepackage{graphicx}

\usepackage[cmtip]{xy}

\usepackage[linecolor=black,prependcaption,textsize=small]{todonotes}

\usepackage{xargs}

\newfont{\msbm}{msbm10 scaled\magstephalf}
\def\aut{\rm aut}

\def\rg{\mathop{\rm rg}}

\Alph{thrmA}
 \newtheorem{thrm}{Theorem}[section]
\newtheorem{theorem}[thrm]{Theorem}
\newtheorem{assumption}[thrm]{Assumption}

\newtheorem{lemma}[thrm]{Lemma}

\newtheorem{corollary}[thrm]{Corollary}
\newtheorem{remark}[thrm]{Remark}

\newtheorem{notation}[thrm]{Notation}

\newtheorem{warning}[thrm]{Warning}

\newtheorem{fact}[thrm]{Fact}

\newtheorem{definition}[thrm]{Definition}
\newtheorem{example}[thrm]{Example}

\theoremstyle{plain}
\newtheorem{thmx}{Theorem}

\newcommand{\acl}{\mathrm{acl}}
\newcommand{\tp}{\mathrm{tp}}
\newcommand{\qtp}{\mathrm{qtp}}

\def\b1K{\mbox{\boldmath  K }_{-1}}

\def\bK{\mbox{\boldmath  K }}

\def\MMM{\mathop{  \mathbb{M}}}

\def\dom{\unrhd}

\def\th{\mathop{\rm Th}}

\def\qftp{\mathop{\bf qftp}}

\def\<{\langle}

\def\>{\rangle}

 \def\dom{\mathop{\rm dom}}

\def\psl{\mathop{\rm PSL}}

\def\comm{\mathop{\rm Comm}}

\def\dcl{{\rm dcl}}

\def\acl{{\rm acl}}

\def\cl{{\rm cl}}

\newbox\noforkbox \newdimen\forklinewidth
\setbox1\hbox to \wd0{\hfil\vrule width \forklinewidth depth-2pt
 height 10pt \hfil}
\wd1=0 cm \setbox\noforkbox\hbox{\lower 2pt\box1\lower
2pt\box0\relax}
\def\unionstick{\mathop{\copy\noforkbox}\limits}
\def\nonfork_#1{\unionstick_{\textstyle #1}}
\newbox\doesforkbox
\setbox\doesforkbox\hbox{\lower 2pt\box1 \lower
2pt\box2\lower2pt\box0\relax}

\def\aut{\rm aut}

\newcommand{\forkindep}[1][]{%
  \mathrel{
    \mathop{
      \vcenter{
        \hbox{\oalign{\noalign{\kern-.3ex}\hfil$\vert$\hfil\cr
              \noalign{\kern-.7ex}
              $\smile$\cr\noalign{\kern-.3ex}}}
      }
    }\displaylimits_{#1}
  }
}
\newcommand{\nonforkindep}[1][]{%
  \mathrel{
    \mathop{
      \vcenter{
        \hbox{\oalign{\noalign{\kern-.3ex}\hfil$\vert$\rlap{$'$}\hfil\cr
              \noalign{\kern-.7ex}
              $\smile$\cr\noalign{\kern-.3ex}}}
      }
    }\displaylimits_{#1}
  }
}

\newcommand{\HH}{\mbox{\msbm H}}
\newcommand{\RR}{\mbox{\msbm R}}

\newcommand{\QQ}{\mbox{\msbm Q}}

\newcommand{\MM}{\mbox{\msbm M}}
\newcommand{\CC}{\mbox{\msbm C}}

\def\sub'm{\prec_{\bK'}}
\def\grpf #1 #2{{\rm grp}_{#2}(#1)}
\def\spanf #1 #2{{\rm span}_{#2}(#1)}
\def\fldf #1 #2{{\rm fld}_{#2}(#1)}
\def\dclf #1 #2{{\rm dcl}_{#2}(#1)}
\def\rclf #1 #2{{\rm rcl}_{#2}(#1)}
\def\aclf #1 #2{{\rm acl}_{#2}(#1)}
\def\acff #1 #2{{\rm acf}_{#2}(#1)}
\def\strf #1 #2{{\rm str}_{#2}(#1)}
\def\tclf #1 #2{{\rm acf}_{#2}(#1)}

\def\abar{{\bf a}}
\def\bbar{{\bf b}}

\def\dbar{{\bf d}}
\def\ebar{{\bf e}}
\def\fbar{{\bf f}}
\def\gbar{{\bf g}}
\def\hbar{{\bf h}}

\def\jbar{{\bf j}}

\def\ubar{{\bf u}}
\def\vbar{{\bf v}}
\def\wbar{{\bf w}}
\def\xbar{{\bf x}}
\def\ybar{{\bf y}}
\def\zbar{{\bf z}}
 
\def\tp{{\rm tp }}

\newcommand{\sidebar}[1]{\vskip10pt\noindent
 \hskip.70truein\vrule width2.0pt\hskip.5em
 \vbox{\hsize= 4truein\noindent\footnotesize\relax #1 }\vskip10pt\noindent}
\date{\today}

\begin{document}
\author{John T. Baldwin}
\address{John T. Baldwin\\ Department of Mathematics, Statistics, and Computer Science\\
	University of Illinois Chicago\\
	322 Science and Engineering Offices (M/C 249)\\
	851 S. Morgan Street\\
	Chicago, IL 60607-7045}
\email{jbaldwin@uic.edu}
\author{Joel Nagloo}
\address{Joel Nagloo\\ Department of Mathematics, Statistics, and Computer Science\\
	University of Illinois Chicago\\
	322 Science and Engineering Offices (M/C 249)\\
	851 S. Morgan Street\\
	Chicago, IL 60607-7045}
\email{jnagloo@uic.edu}

\title{Categoricity and non-arithmetic Fuchsian groups} 

\thanks{J. Nagloo is partially supported by NSF grant DMS-2348885. Some of the work in this paper was completed while J. Nagloo was supported by an AMS Centennial Fellowship.}

\begin{abstract}
Let $\Gamma \subset PSL_2(\mathbb{R})$ be a non-arithmetic Fuchsian group of the first kind with finite covolume, and let $j_{\Gamma}$ be a corresponding uniformizer. In this paper we introduce a natural $L_{\omega_1,\omega}$-axiomatization $T^{\infty}_{SF}$ of the theory of $j_{\Gamma}$ viewed as a covering map. We show that $T^{\infty}_{SF}$ is categorical in all infinite cardinalities, extending to the non-arithmetic setting earlier results of Daw and Harris obtained in the arithmetic case. We also show that the associated first-order theory $T_{j_{\Gamma}}$ is complete, admits elimination of quantifiers, and is $\omega$-stable.
\end{abstract}

\maketitle


\section{Introduction}

Let $\Gamma\subset PSL_2(\mathbb{R})$ be a Fuchsian group of the first kind and of finite covolume, and let $j_{\Gamma}:\mathbb{H}_{\Gamma}\rightarrow S_{\Gamma}(\mathbb{C})$ be a uniformizer, where $\mathbb{H}_{\Gamma}:=\mathbb{H}\cup C_{\Gamma}$ and $C_{\Gamma}$ is the set of cusps of $\Gamma$. Thus, $j_{\Gamma}$ is a $\Gamma$-automorphic function: a covering map realizing the smooth projective curve $S_{\Gamma}(\mathbb{C})$ as the quotient $\Gamma\setminus\mathbb{H}_{\Gamma}$.  This classical situation, originating in the work of Poincar\'e and Klein at the turn of the twentieth century, lies at the intersection of complex analysis, hyperbolic geometry, and the theory of automorphic functions.

This paper draws on two streams of work
 studying the function $j_{\Gamma}$ through the lens of model theory. On the one hand, there is the work of Bl\'azquez-Sanz, Casale, Freitag and Nagloo (cf. \cite{CFN}, \cite{BCFN}),  which provides a full classification of the structure of the set $\mathcal{X}_{\Gamma}$ defined — in a differentially closed field — by the Schwarzian differential equation satisfied by $j_{\Gamma}$. Their work shows that a key dividing line in geometric group theory, namely the arithmeticity of the group $\Gamma$, fully governs the complexity of algebraic relations among the solutions in $\mathcal{X}_{\Gamma}$. Roughly speaking, they show that the Fuchsian group $\Gamma$ is arithmetic if and only if $\mathcal{X}_{\Gamma}$ is not $\omega$-categorical, that is, if and only if the solution set admits infinitely many `algebraic symmetries'. It is a fundamental result of Margulis that the existence of infinitely many `Hecke' correspondences (equivalently, a large commensurator $\text{Comm}(\Gamma)$) characterizes the arithmeticity of the group. In particular, one of the main achievements of the work of Bl\'azquez-Sanz, Casale, Freitag and Nagloo is to show that these `Hecke' correspondences account for {\em all} algebraic relations in $\mathcal{X}_{\Gamma}$.

The second line of work in model theory studying $j_{\Gamma}$ is also concerned with questions of categoricity,  but in uncountable cardinalities rather than $\aleph_0$. Namely, one may view $j_{\Gamma}:\mathbb{H}_{\Gamma}\rightarrow S_{\Gamma}(\mathbb{C})$ as a two-sorted structure $(\mathbb{H}_{\Gamma};S_{\Gamma}(\mathbb{C}))$ in a natural language and find an $L_{\omega_1,\omega}$-axiomatization of the theory $T_{j_{\Gamma}}$ of that structure. This perspective is rooted in Zilber’s  program to find axiomatizations of canonical structures that are categorical in all uncountable powers. This gives an `algebraic'  characterization of structures with inherent analytic properties. A natural question is whether $T_{j_{\Gamma}}$  exemplifies this program — in particular, whether the unique model of cardinality continuum must coincide with the classical analytic structure. Daw and Harris \cite{DawHarris} showed that if $\Gamma$ is arithmetic, then $T_{j_{\Gamma}}$ is indeed categorical in all uncountable powers. This result was later generalized by Eterovi\'c \cite{Etrev} to include arithmetic uniformizations in higher dimensions. Their work makes essential use of deep results from arithmetic geometry, in particular open image theorems for Galois representations governing Hecke orbits on Shimura varieties, while also showing that categoricity itself can force arithmetic statements  about the Galois representations associated with points on the Shimura variety via a theorem of Keisler. In higher dimensions, this relies on open image conditions that are presently conjectural and closely related to the Mumford–Tate conjecture.

It is natural to ask whether the arithmeticity of the group $\Gamma$ again provides a dividing line for categoricity in this new sense. In this paper, we show that, unlike in the context of differentially closed fields, the categoricity of $T_{j_{\Gamma}}$ is not governed by the arithmetic nature of $\Gamma$. More precisely, we prove that when $\Gamma$ is non-arithmetic, the theory $T_{j_{\Gamma}}$ is categorical in all uncountable cardinalities. Moreover, the proof in the non-arithmetic case is substantially simpler than in the arithmetic setting, and its structure clarifies the respective roles played by model-theoretic and geometric considerations in the earlier results.

Our argument follows the same pattern as \cite{DawHarris}: i) axiomatize the class of models in $L_{\omega_1,\omega}$ (which we named $T^{\infty}_{SF}$) and prove by a back-and-forth that the class satisfies all conditions on a quasi-minimal excellent class  except $\aleph_0$-homogeneity over models (see Definition~\ref{mcax}.2.c and Definition~\ref{qmdef}.1 and \ref{qmdef}.2.a; ii) Apply further deep geometric results to
obtain the needed homogeneity. Our first main theorem is

{

\begin{thmx}\label{A} All models of $T^{\infty}_{SF}$ are $(\infty,\omega)$ equivalent and satisfy the same Scott sentence. The associated first order theory $T_{j_{\Gamma}}$ is complete, admits elimination of quantifiers, and is $\omega$-stable.
\end{thmx}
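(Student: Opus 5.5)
We will prove Theorem~\ref{A} by exhibiting a back-and-forth system of partial isomorphisms between any two models of $T^{\infty}_{SF}$. This system will consist of finite partial maps with a specific ``good'' shape, and the same system will also give quantifier elimination.

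\textbf{Models and good partial maps.} A model consists of the following data:
\begin{itemize}
\item an algebraically closed field $F$ (characteristic $0$) carrying the curve $S_\Gamma(F)$;
\item a sort $H$ carrying an action of $\Gamma$ (with the cusp part $C_\Gamma$ handled by the axioms);
\item a map $j\colon H\to S_\Gamma(F)$ whose fibres are exactly the $\Gamma$-orbits, with the prescribed stabilizers at elliptic points and cusps.
\end{itemize}
A \emph{good} partial map $f$ has domain a finite union of $\Gamma$-orbits $\Gamma h_1,\dots,\Gamma h_n$ in $H$, together with the field $\acl(k, j(h_1),\dots,j(h_n))$ for a finitely generated base $k$ (say $\overline{\mathbb{Q}}$ plus constants naming special points). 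We require that $f$ is $\Gamma$-equivariant on the orbits, is a field isomorphism on the algebraic closures, and commutes with $j$.

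The key input making the non-arithmetic case easy is the following. Since $\mathrm{Comm}(\Gamma)$ is a finite extension of $\Gamma$ (Margulis), the only algebraic relations between distinct $j$-values come from finitely many correspondences. These are already witnessed by the finitely many special points fixed by the commensurator, which we name over $k$. So the quantifier-free type of a tuple from $H$ is determined by two things:
\begin{itemize}
\item which orbits are related via the finite group $\mathrm{Comm}(\Gamma)/\Gamma$, or are special;
\item the field type of the $j$-values.
\end{itemize}
In particular, there is no Galois-representation or open-image condition to check: a non-special orbit imposes no extra algebraic constraints.

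\textbf{Extension step.} Given a good $f$ and a new point $h\in H$, there are three cases.
\begin{itemize}
\item If $j(h)$ is algebraic over the current field, then by the Comm-finiteness $h$ lies in one of finitely many orbits determined by the domain, or is special. We pick the corresponding point on the other side. Equivariance and the stabilizer axioms (prescribed isotropy at elliptic points and cusps) ensure that the fibre over $f(j(h))$ matches.
\item If $j(h)$ is transcendental, then since both sides are algebraically closed of infinite transcendence degree (after passing to $(\infty,\omega)$, or using a saturation-free argument with $\aleph_0$-many choices), we pick a transcendental $y$ on the other side. We then choose any $h'\in j^{-1}(y)$, which exists by surjectivity of $j$, and extend $f$ equivariantly on $\Gamma h$.
\item Points of the field sort are handled by the same dichotomy: for a transcendental point, choose a preimage and apply the previous case.
\end{itemize}
This yields $(\infty,\omega)$-equivalence of all models, hence a common Scott sentence.

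\textbf{First-order consequences.}
\begin{itemize}
\item \emph{Completeness.} The first-order theory $T_{j_\Gamma}$ is complete, since $(\infty,\omega)$-equivalence implies elementary equivalence.
\item \emph{Quantifier elimination.} This needs the back-and-forth inside $\omega$-saturated models of $T_{j_\Gamma}$. Such models satisfy the first-order part of the axioms, and the infinitary axiom (fibres are exactly orbits) is only needed in the weak form the extension step uses: for $h,h'$ in the same fibre, some $\gamma\in\Gamma$ carries one to the other. Fibres may be larger in non-standard models, but the first-order axioms still say $j(h)=j(h')$ and the stabilizers agree. So we adapt the maps to include $j$-fibres, rather than $\Gamma$-orbits, modulo $\Gamma$-equivalence types realized by $\omega$-saturation.
\item \emph{$\omega$-stability.} This follows by counting types over a countable model $M$. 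A type of a point $h$ is determined by three pieces of data:
  \begin{itemize}
  \item the field type of $j(h)$ over $M$, of which there are countably many;
  \item whether $h$ is $\Gamma\mathrm{Comm}$-related to a point of $M$, again countably many options;
  \item otherwise, nothing further.
  \end{itemize}
  Hence $|S(M)|=\aleph_0$.
\end{itemize}

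\textbf{Main obstacle.} We expect the algebraic case of the extension step to be the main obstacle. There one must show that every algebraic relation among $j$-values of distinct non-special orbits comes from commensurator correspondences, and transfer this from $\mathbb{C}$ to arbitrary algebraically closed $F$. We would first try to encode the finitely many Hecke-type curves in $S_\Gamma\times S_\Gamma$ as $k$-definable axioms. We would then invoke the Bl\'azquez-Sanz–Casale–Freitag–Nagloo classification, which says there are no other algebraic relations, in its algebraic form via Ax–Schanuel for $j_\Gamma$.
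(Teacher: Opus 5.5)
\textbf{There is a genuine gap in your extension step, and it comes from working with the wrong structure.}

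In $T^{\infty}_{SF}$ the domain sort carries a function symbol $f_g$ for every $g\in G=\mathrm{Comm}(\Gamma)$, not only for $g\in\Gamma$. By Remark~\ref{key} and Lemma~\ref{3.3rev3}, the quantifier-free type of a new point $h$ over a substructure is therefore the field type of the whole $k$-tuple $J(\check{\mathbf g}h)=(J(\check g_1h),\dots,J(\check g_kh))$, which is a point of the curve $Z_{\check{\mathbf g}}$. Matching $J(h)$ alone is not enough.

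In your algebraic case this breaks down. Once the base contains $L$, the field part of your domain is algebraically closed, so all the $J(\check g_ih)$ already lie in the domain. (The base must contain $L$, over which $S$, the $\Phi_g$ and the $Z_{\check{\mathbf g}}$ are defined; for non-arithmetic $\Gamma$ this need not lie inside $\overline{\mathbb{Q}}$.) An arbitrary $h'$ in the fibre over $f(J(h))$ gives values $J(\check g_ih')$ that are \emph{some} roots of $\Phi_{\check g_i}(f(J(h)),Y)=0$, not necessarily $f(J(\check g_ih))$. The resulting map is then not a partial isomorphism.

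The missing ingredient is the modular axiom $Mod^2_{\check{\mathbf g}}$. Since $Z_{\check{\mathbf g}}$ is defined over $L$, the transported tuple $f(J(\check{\mathbf g}h))$ lies in $Z_{\check{\mathbf g}}(F')$. So some $h'$ has $J'(\check{\mathbf g}h')$ exactly equal to it, and SF then ensures that $h'$ lies outside the $G$-orbits of the range. In the transcendental case an arbitrary lift does work, but only because $Z_{\check{\mathbf g}}$ is absolutely irreducible, so every point over a transcendental $y$ is generic of a single type; you need to say this.

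Your stated justification for the algebraic case is also false. $\mathrm{acl}(k,j(h_1),\dots,j(h_n))$ contains infinitely many points of $S$ whose fibres are new Hodge generic orbits, neither special nor related to the domain.

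\textbf{The ``main obstacle'' you identify is a phantom, and the claim you hope to extract from it is false.} Fix $h_1$. Any of the infinitely many points of $S$ algebraic over $E(J(h_1))$, other than the finitely many points of $J(Gh_1)$, is $J(h_2)$ for some $h_2\notin Gh_1$. So non-Hecke algebraic relations between distinct orbits certainly occur. They are harmless, because the back-and-forth transports field types wholesale. The only thing actually needed is that a Hecke relation $\Phi_g(J(h_1),J(h))=0$ forces $h\in Gh_1$. This follows immediately from $Mod^2$ and SF, which rest on Poincar\'e's Fact~\ref{FactCorrespondence} and Lemma~\ref{Zgthm}. The Bl\'azquez-Sanz--Casale--Freitag--Nagloo classification and Ax--Schanuel concern differential-algebraic relations among solution functions and say nothing about values at points; the paper uses neither.

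The same correction propagates to your first-order consequences:
\begin{enumerate}
\item Your quantifier-elimination paragraph conflates SF with its failure in $\omega$-saturated models (your ``weak form'' is SF itself). In $\omega$-saturated models one needs $Mod^2$ together with saturation, to lift a prescribed point of $Z_{\check{\mathbf g}}$ while avoiding finitely many $G$-orbits.
\item Your $\omega$-stability count should run over types of the $k$-tuple, as in Corollary~\ref{wstab}. The count is still countable: there are finitely many choices of a point of $Z_{\check{\mathbf g}}$ over an algebraic $J(h)$, and a unique generic type otherwise.
\end{enumerate}
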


Compared to the arithmetic setting, there are two important variants in the proof. The first variant is that the non-arithmeticity  both vastly simplifies the argument for Theorem~\ref{A} and allows the conclusion of $\omega$-stability at this point. This improvement allows the second, namely, a completely different argument for our next main result:

\begin{thmx}\label{B} The class is
$\aleph_0$-homogeneous over models, so almost quasiminimal- excellent and thus  categorical in all infinite
cardinalities.

\end{thmx}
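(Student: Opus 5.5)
Theorem~\ref{B} requires two things: the missing condition from Definition~\ref{qmdef}.2.a, $\aleph_0$-homogeneity over models, and then the standard machinery that turns this into categoricity.

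\textbf{Setup.} Fix a countable model $M$ of $T^{\infty}_{SF}$, and work in the quasiminimal pregeometry given by the closure operator from the back-and-forth in Theorem~\ref{A}. That operator is essentially: close the $\mathbb{H}$-sort under $\Gamma$ and under preimages of $j_\Gamma$ of points algebraic over the field generated. Take finite tuples $\abar,\bbar$ in $\mathbb{H}$ with the same quantifier-free type over $M$, and a further element $c$. We must find $d$ such that $\abar c$ and $\bbar d$ have the same qf-type over $M$.

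\textbf{Key step: pass through the first-order theory.} Theorem~\ref{A} gives elimination of quantifiers for $T_{j_\Gamma}$. Moreover, since $\Gamma$ is non-arithmetic, $\mathrm{Comm}(\Gamma)$ is a finite extension of $\Gamma$ (Margulis). So the only algebraic relations among $j_\Gamma$-values of points in distinct $\Gamma$-orbits come from the finitely many commensurator correspondences; this is the classification of Bl\'azquez-Sanz--Casale--Freitag--Nagloo. In particular, the qf-type of a $\mathbb{H}$-tuple over $M$ is determined by two pieces of data:
\begin{enumerate}[(i)]
\item the field type of the $j$-images over $j(M)$;
\item which coordinates lie in $\mathrm{Comm}(\Gamma)$-translates of each other or of elements of $M$.
\end{enumerate}
Crucially, there is no infinite Hecke tower, hence no Galois-representation (open image) data over the finitely generated field. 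This is exactly the condition that in the arithmetic case needed Serre-type open image theorems.

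Given this, I would argue as follows. Equality of types over $M$ in the first-order sense (qf, by QE) lets us realize $\tp(c/M\abar)$ transported along the partial elementary map $M\abar\mapsto M\bbar$ in a saturated elementary extension. Using $\omega$-stability from Theorem~\ref{A}, the type $\tp(c/M\abar)$ is stationary over the model $M$. The remaining task is to show that the realization can be pulled back into our structure, that is, into the closure in the $L_{\omega_1,\omega}$ model. If $c\in\cl(M\abar)$, it is a preimage of an algebraic point over $M\abar$. Finite fibers modulo $\Gamma$, together with the qf-type description above (only finitely many correspondences to check), show that the matching $d\in\cl(M\bbar)$ exists. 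This finiteness is where the counting of possible choices uses non-arithmeticity. If $c\notin\cl(M\abar)$, then the generic type is unique by quasiminimality, and any $d$ outside $\cl(M\bbar)$ works.

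\textbf{Conclusion.} Combining this homogeneity with the conditions already verified in Theorem~\ref{A} gives a quasiminimal class in the sense of Definition~\ref{qmdef}.1, with countable closure property and homogeneity over $\emptyset$ and over models. By Bays--Hart--Hyttinen--Kes\"al\"a--Kirby, excellence is automatic for quasiminimal classes, so the class is categorical in all uncountable cardinalities. Countable categoricity follows from the uniqueness of the Scott sentence in Theorem~\ref{A}.

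I expect the main obstacle to be the algebraic case of the homogeneity step. One must check that the qf-type over a model really determines the orbit of the finitely many $\mathrm{Comm}(\Gamma)$-translates and the cusp behaviour, so that the stationarity coming from $\omega$-stability descends to the $L_{\omega_1,\omega}$ setting. If this fails, I would fall back on showing directly that $\mathrm{Gal}$ of the field generated over $j(M)$ acts on the fibers through a finite group compatible with $\Gamma\backslash\mathrm{Comm}(\Gamma)$.
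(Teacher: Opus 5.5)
Your route is different from the paper's, and it can be made to work, but as written there is a genuine gap at the step you flag yourself. The paper never constructs an extension of a partial embedding. It uses Fact~\ref{sisosuff} to reduce $\aleph_0$-homogeneity over models to showing that $\tp(\ebar/M)$ does not split over a finite subset of $M$. It gets such a finite $X$ for the field part $\qftp_{\tau_S}(\abar_2\abar_3/M)$ from $\omega$-stability of the field sort, and transfers non-splitting to the full $\tau$-type via Lemma~\ref{3.3rev3}. In your direct verification, the algebraic case is asserted rather than proved, and the tools you cite would not prove it.

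First, the determination claim (i)--(ii) does not come from the Bl\'azquez-Sanz--Casale--Freitag--Nagloo classification. That result concerns algebraic relations among the \emph{functions} $j_\Gamma\circ g$ as solutions of the Schwarzian equation in a differentially closed field; it says nothing about values at points. Indeed, by $Mod^2$ and SF, points in distinct $G$-orbits can have $J$-values satisfying algebraic relations unrelated to any Hecke correspondence. The correct reason is syntactic (Remark~\ref{key}, Lemma~\ref{3.3rev3}): over a $J$-closed base, every atomic $\tau$-formula in $x$ is an orbit equation or a field formula in the finite tuple $J(\check\gbar x)$. Accordingly, (i) must be the field type of the images of \emph{all} the $\check\gbar$-translates. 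When a double coset $\Gamma g\Gamma$ contains several right cosets of $\Gamma$, the field type of $J(\abar)$ alone does not determine how $J(ga_i)$ sits relative to the other coordinates. Second, $\omega$-stability and stationarity play no role here; $\tp(c/M\abar)$ is a type over a non-model, so it need not be stationary. A realization in a saturated elementary extension does not by itself give an element of the second $L_{\omega_1,\omega}$-model.

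What is missing is the actual construction.
\begin{enumerate}
\item Enlarge $\abar,\bbar$ by their $\check\gbar$-translates and the images of these. Let $K\cong K'$ be the fields generated over $F_M$ by the field-sort coordinates, with the isomorphism induced by the partial embedding.
\item Suppose $c\in D$ lies in $\cl(M\abar)$ but not in $G\abar\cup D_M$. Then $J(\check\gbar c)$ is algebraic over $K$, so the transported field type is realized by a tuple in the algebraically closed field sort of the second model.
\item This tuple lies on $Z_{\check\gbar}$, so $Mod^2_{\check\gbar}$ yields $d$ with $J(\check\gbar d)$ equal to it.
\item SF excludes $d\in G\bbar\cup D_M$. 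Such a coincidence gives a field equation $J(d)=J(\check g_j b_\ell)$ or $J(d)\in S_M$. This pulls back to the corresponding equation for $c$, and SF in the first model then forces $c\in G\abar\cup D_M$.
\item Lemma~\ref{3.3rev3} then shows that $\abar c\mapsto \bbar d$ is a partial embedding over $M$.
\end{enumerate}

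The generic case goes the same way, using the transcendence degree and the absolute irreducibility of $Z_{\check\gbar}$. Saying it holds ``by quasiminimality'' is circular. Uniqueness of the generic type is part of what must be checked, and there is one generic type per sort, which is why the class is only \emph{almost} quasiminimal.

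Completed this way, your argument is the forth step of Theorem~\ref{infcomp} run over a countable closed model instead of a finitely generated substructure. It makes transparent that non-arithmeticity enters only through the finiteness of $J(Gc)$. The paper's non-splitting route avoids building the extension explicitly and instead presents homogeneity as a consequence of $\omega$-stability.
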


Rather than relying on open-image theorems of Serre et al (\cite[\S 5.1]{DawHarris}), we apply the characterization \cite{BHHKK} of full excellence ($\aleph_0$-homogeneity over models) in terms of non-splitting, and thus we 
obtain  the result via $\omega$-stability.

The paper is organized as follows. After some background on Fuchsian groups \S~\ref{FG}, we describe in Section~\ref{formact} the formalism for the current study. In Section~\ref{infcompsec} we show Theorem~\ref{A} (Theorem~\ref{infcomp}) and introduce a notion of determined type which clarifies the argument for both the arithmetic and non-arithmetic cases.  In Section~\ref{aqme}, we use $\omega$-stability and the notion of determination to conclude Theorem~\ref{B} (Theorem~\ref{aqsmsuff} and Theorem~\ref{goal}).

\subsection*{Acknowledgements}
 We thank James Freitag, Anatoly Libgober, Andres Villaveces, as well as other participants of the Chicago-Bogota online seminar ``Conexi\'on de GALoiS'' for helpful conversations.
    }
\section{Background on Fuchsian groups and the covering maps}\label{FG}
We begin by recalling some of the basic facts about Fuchsian groups that will play a role in our analysis. Standard references for this section are \cite{Katok} and \cite{Shimura}.

Throughout, $\mathbb{H}$ will denote the upper half complex plane, while $\mathbb{H}^*:=\mathbb{H}\cup{\bf P}^1(\mathbb{R})$. For $F$ a subfield of $\mathbb{C}$, we will consider the action on $\mathbb{H}$ (and $\mathbb{H}^*$) of $SL_2(F)$ and $PSL_2(F)$ by linear fractional transformation: for 
$g=\begin{pmatrix}
    a & b \\
    c & d
  \end{pmatrix}
  \in SL_2(F)$ and $\tau\in\mathbb{H}$, we write
$g\cdot\tau=\begin{pmatrix}
    a & b \\
    c & d
  \end{pmatrix}\cdot\tau:=\frac{a\tau+b}{c\tau+d}$. In the case when $F=\mathbb{R}$, this action gives all the orientation preserving isometries of $\mathbb{H}$.
\par Let $\Gamma\subset PSL_2(\mathbb{R})$ be a Fuchsian group, namely, a discrete subgroup of $PSL_2(\mathbb{R})$. We assume throughout that $\Gamma$ is of the first kind and  of finite covolume. This means, respectively, that its limit set is ${\bf P}^1(\mathbb R)$ and that it has a fundamental domain with a finite area. Indeed, in this paper a ``Fuchsian group'' will always mean one where these two conditions hold. By a cusp of $\Gamma$, we mean a point $\tau\in\mathbb{H}^*$ such that its stabilizer group $\Gamma_{\tau}=\{g\in\Gamma\;:\;g\cdot \tau=\tau\}$ contains a parabolic element $\gamma\in\Gamma$, that is, an element $\gamma=\begin{pmatrix}
    a & b \\
    c & d
  \end{pmatrix}\in\Gamma$ with trace $\text{tr}(\gamma):=a+d=\pm 2$. It is a fact (cf. \cite[\S 1.3]{Shimura}) that $\Gamma$ acts on the set $C_{\Gamma}$ of its cusps and the action of $\Gamma$ on $\mathbb{H}_{\Gamma}:=\mathbb{H}\cup C_{\Gamma}$, yields a compact Riemann surface $\Gamma\setminus \mathbb{H}_{\Gamma}$ or equivalently a projective non-singular irreducible curve $S_{\Gamma}$. Notice that, if $\Gamma\setminus \mathbb{H}$ is non-compact, then the non-compactness occurs exactly at finitely many cusps. By an \emph{automorphic function} for $\Gamma$ (or $\Gamma$-automorphic), we mean a meromorphic function $f$ on $\mathbb{H}$ which is meromorphic at every cusp of $\Gamma$  and which is invariant under the action of $\Gamma$:
\[f(g\cdot \tau)=f(\tau)\;\;\;\text{ for all }\;g\in\Gamma\text{ and } \tau\in\mathbb{H}.\]
One usually denotes the field of $\Gamma$-automorphic functions by $\mathcal{A}_0(\Gamma)$ and the latter is isomorphic to the function field of the curve $S_{\Gamma}$. In particular, we have a distinguished automorphic function for $\Gamma$, denoted $j_{\Gamma}:\mathbb{H}_{\Gamma}\twoheadrightarrow S_{\Gamma}(\mathbb{C})$, called a uniformizer that is a generator of $\mathcal{A}_0(\Gamma)$. The function $j_{\Gamma}$ is not unique since there are nontrivial automorphisms of the curve $S_{\Gamma}$. Moreover, uniqueness follows once the value of $j_{\Gamma}$ at three points have been specified. The following notion is central  to the study of Fuchsian groups. 

\begin{definition}\label{commdef} Let $\Gamma$ and
	$\Gamma'$ be two Fuchsian groups.
\begin{enumerate}
 \item We say that $\Gamma$ and $\Gamma'$ are \emph{commensurable} (as subgroups of $PSL_2(\mathbb{R})$) if
$\Gamma \cap \Gamma'$ is of finite index in both $\Gamma$ and $\Gamma'$, written $\Gamma \sim \Gamma'$.

\item The \emph{commensurator} of $\Gamma$ is the subgroup of $PSL_2(\mathbb{R})$ defined as
    $$\text{Comm}(\Gamma)=\{g \in  {\psl}_2(\RR)\colon g^{-1} \Gamma g\ \text{\  is  commensurable with}\ \Gamma \}.$$
    \end{enumerate}
\end{definition}

In this paper we are mainly concerned with non-arithmetic Fuchsian groups. The definition of arithmeticity, while not difficult, is quite involved and will not be given here. We refer the reader to the references above. For the purposes of our work, it suffices and indeed essential to consider an equivalent characterization of arithmeticity due to Margulis.

\begin{fact}[Margulis] Let $\Gamma$ be a Fuchsian group. Then, $\Gamma$ is arithmetic if and only if it has infinite index in $\text{Comm}(\Gamma)$.
\end{fact}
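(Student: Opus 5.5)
The plan is to split the statement into a soft reduction and two directions of unequal depth. First I will show that, for a Fuchsian group $\Gamma$ of the first kind with finite covolume, $[\text{Comm}(\Gamma):\Gamma]<\infty$ holds if and only if $\text{Comm}(\Gamma)$ is discrete in ${\psl}_2(\RR)$.
\begin{enumerate}
\item If the index is finite, $\text{Comm}(\Gamma)$ is a finite union of cosets of the discrete group $\Gamma$, hence discrete.
\item If $\text{Comm}(\Gamma)$ is discrete, it is a Fuchsian group containing $\Gamma$. A fundamental domain for $\Gamma$ is tiled by $[\text{Comm}(\Gamma):\Gamma]$ copies of one for $\text{Comm}(\Gamma)$, so the index is at most $\mathrm{covol}(\Gamma)/\mathrm{covol}(\text{Comm}(\Gamma))$.
\item That quotient is finite because covolumes of Fuchsian groups are bounded below (Siegel's lower bound $\pi/21$).
\end{enumerate}
A non-discrete subgroup of ${\psl}_2(\RR)$ containing a lattice is in fact dense, since a closed subgroup of positive dimension normalised by a Zariski-dense group must be everything. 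So the statement becomes: $\Gamma$ is arithmetic if and only if $\text{Comm}(\Gamma)$ is dense.

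For the direction arithmetic $\Rightarrow$ dense commensurator, I will use the definition. Up to commensurability, $\Gamma$ is the image of the norm-one units $\mathcal{O}^1$ of an order $\mathcal{O}$ in a quaternion algebra $B$ over a totally real field $F$. Here $B$ splits at exactly one real place, giving $B\otimes_{\sigma}\RR\cong M_2(\RR)$. Every $b\in B^\times$ with positive reduced norm normalises $B$, so it carries $\mathcal{O}$ to another order. Any two orders are commensurable, so $b\mathcal{O}^1b^{-1}\sim\mathcal{O}^1$, and the image of such $b$ lies in $\text{Comm}(\Gamma)$. Since $B$ is dense in $B\otimes_\sigma\RR$, this image is dense in ${\psl}_2(\RR)$. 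Commensurability of $\Gamma$ with $\Gamma'$ gives $\text{Comm}(\Gamma)=\text{Comm}(\Gamma')$, so the conclusion passes to $\Gamma$.

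The converse, dense commensurator $\Rightarrow$ arithmetic, is Margulis's commensurator theorem and is where essentially all the difficulty lies. My plan is to reach Takeuchi's criterion: $\Gamma$ is arithmetic if and only if
\begin{enumerate}
\item the trace field $K=\mathbb{Q}(\mathrm{tr}\,\Gamma^{(2)})$ is a number field,
\item all traces are algebraic integers, and
\item every non-identity embedding $\sigma$ of $K$ sends $\mathrm{tr}\,\Gamma^{(2)}$ into $[-2,2]$.
\end{enumerate}
Local rigidity of cocompact (or finite-covolume) Fuchsian groups fails, so $K$ being a number field cannot come from rigidity. Instead I will use that conjugation by $g\in\text{Comm}(\Gamma)$ preserves traces of a finite-index subgroup. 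This forces $\text{Comm}(\Gamma)$ to lie in the $K$-points of the associated quaternion algebra $A=K[\Gamma^{(2)}]$.

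For each place $v$ of $K$ (field embeddings, and non-archimedean valuations of the finitely generated field $K$), consider the representation $\Gamma^{(2)}\to (A\otimes K_v)^1$. The key step, which is Margulis's superrigidity-for-commensurators argument, is as follows. If this representation has unbounded image at $v$ (a non-compact place other than the identity one), then, because it extends to the dense group $\text{Comm}(\Gamma)$, it extends continuously to ${\psl}_2(\RR)$. That is impossible for a non-archimedean place and for a Galois conjugate place different from the identity. I expect this to be the main obstacle; I would follow Margulis's route via boundary maps into $\mathbf{P}^1(K_v)$ and ergodicity of $\text{Comm}(\Gamma)$ on $\mathbf{P}^1(\RR)$. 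Boundedness at every non-archimedean place then gives integrality of traces, and also finiteness of transcendence degree via specialisation. Boundedness at the other archimedean places gives condition (3), and Takeuchi's criterion finishes the proof.
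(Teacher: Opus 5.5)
The paper gives no proof of this statement. It quotes it from Margulis as a black box, and the only implication the paper actually uses is ``non-arithmetic $\Rightarrow [\mathrm{Comm}(\Gamma):\Gamma]<\infty$''. That implication makes the coset decomposition of Notation~\ref{cosetdecomps} finite and underlies Lemma~\ref{finmancommens}, Remark~\ref{key} and Corollary~\ref{wstab}.

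So your proposal supplies what the paper cites, along the standard route, and its architecture is sound. The following pieces are all correct:
\begin{itemize}
\item the reduction ``finite index $\Leftrightarrow$ $\mathrm{Comm}(\Gamma)$ discrete'' (for the direction discrete $\Rightarrow$ finite index, positivity of the covolume of the discrete group $\mathrm{Comm}(\Gamma)$ already suffices, so Siegel's bound is more than you need);
\item non-discrete $\Rightarrow$ dense, by Borel density and simplicity of $\mathfrak{sl}_2(\mathbb{R})$;
\item the quaternion-algebra argument that arithmetic groups have dense commensurator.
\end{itemize}

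What your decomposition adds over the citation is visibility. The half the paper depends on is the contrapositive of the deep half (infinite index $\Rightarrow$ dense $\Rightarrow$ arithmetic). It therefore rests on commensurator superrigidity, which, unlike lattice superrigidity, is valid for $\mathrm{PSL}_2(\mathbb{R})$. You are right that the number-field property cannot come from local rigidity here. But that step in your proposal is an outline of Margulis's theorem, not a proof of it, so your argument and the paper's citation carry the same real burden.

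If you write it out, tighten three points.
\begin{itemize}
\item The inclusion of $\mathrm{Comm}(\Gamma)$ in $A^\times/K^\times$ does not follow from trace preservation as such. You need that $g$ conjugates $\Gamma\cap g^{-1}\Gamma g$ into $\Gamma$, and that the $K$-span of $(\Gamma\cap g^{-1}\Gamma g)^{(2)}$ is still all of $A$. Then $gAg^{-1}=A$, and Skolem--Noether finishes.
\item Superrigidity needs a locally compact target field. For an a priori transcendental $K$, work with embeddings $K\hookrightarrow k$ into local fields, using the lemma that an element of a finitely generated field that is not an algebraic integer has absolute value $>1$ under some such embedding. Arbitrary valuations of $K$ will not do, since their completions, such as $\mathbb{Q}((t))$, need not be locally compact. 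Boundedness at all these embeddings makes every trace of $\Gamma^{(2)}$ an algebraic integer. Hence $K$ has transcendence degree zero (not merely finite), and no separate specialisation argument is needed.
\item In the superrigidity step, the commensurator enters through uniqueness of equivariant boundary maps $\mathbf{P}^1(\mathbb{R})\to\mathbf{P}^1(k)$ for the finite-index subgroups $\Gamma\cap\lambda^{-1}\Gamma\lambda$, not through its own ergodicity. This uniqueness forces the boundary map to be $\mathrm{Comm}(\Gamma)$-equivariant. Density then yields continuity of the representation and its extension to $\mathrm{PSL}_2(\mathbb{R})$.
\end{itemize}
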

In particular, a characteristic feature of arithmetic Fuchsian groups is the existence of infinitely many ``Hecke'' correspondences in $S_{\Gamma}(\mathbb{C})\times S_{\Gamma}(\mathbb{C})$. Let us make this notion of correspondences more precise; note that non-arithmetic Fuchsian groups will only have finitely many of those. If $g\in \text{Comm}(\Gamma)$ is given, then by definition we have three pairwise commensurable Fuchsian groups: $\Gamma$, $\Gamma^{g}:=g^{-1}\Gamma g$ and $\Gamma_g:=\Gamma\cap \Gamma^g$. It is not hard to prove that these groups have the same set of cusps (cf. \cite[Proposition 1.30]{Shimura}). So the uniformizers $j_{\Gamma}$ and $j_{\Gamma}\circ g$ of $\Gamma$ and $g^{-1}\Gamma g$ respectively, are also automorphic functions for their common subgroup $\Gamma_g$. A classical result due to Poincar\'e gives that any two automorphic functions for a Fuchsian group are related by a complex polynomial:

\begin{fact}\label{FactCorrespondence} For each $g\in  \text{Comm}(\Gamma)\setminus\Gamma$, there is a non-zero polynomial $\Phi_g\in\mathbb{C}[X,Y]$ such that for all $z\in \HH$,  $\Phi_g(j_{\Gamma}(z),j_{\Gamma}(g\cdot z))=0$ and $\Phi_g(j_{\Gamma}(z),y)=0$ if and only $y = j_{\Gamma}(h\cdot z)$ for some $h\in \Gamma g\Gamma$.
\end{fact}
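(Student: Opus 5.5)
We prove Fact~\ref{FactCorrespondence} via the intermediate group $\Gamma_g=\Gamma\cap\Gamma^g$. Since $g\in\mathrm{Comm}(\Gamma)$, $\Gamma_g$ has finite index in $\Gamma$ and in $\Gamma^g$, and by \cite[Proposition 1.30]{Shimura} all three groups have the same cusps. Hence $\Gamma_g$ is again a Fuchsian group of the first kind with finite covolume, and $S_{\Gamma_g}=\Gamma_g\setminus\mathbb{H}_{\Gamma}$ is a compact Riemann surface. Both $j_\Gamma$ and $z\mapsto j_\Gamma(g\cdot z)$ are $\Gamma_g$-automorphic: the first because $\Gamma_g\subset\Gamma$; the second because for $\delta\in\Gamma^g$ we have $g\delta g^{-1}\in\Gamma$, so $j_\Gamma(g\delta z)=j_\Gamma(g\delta g^{-1}\cdot gz)=j_\Gamma(gz)$. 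Meromorphy at cusps is preserved since $g$ maps cusps of $\Gamma^g$ to cusps of $\Gamma$.

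Next we produce $\Phi_g$. The field $\mathcal{A}_0(\Gamma_g)$ is the function field of the curve $S_{\Gamma_g}$, so it has transcendence degree $1$ over $\mathbb{C}$. Any two of its elements are therefore algebraically dependent. This is Poincaré's classical result, and the paper treats it as given. Since $j_\Gamma$ is nonconstant, $\mathcal{A}_0(\Gamma_g)$ is a finite extension of $\mathbb{C}(j_\Gamma)$ of degree $[\Gamma:\Gamma_g]$, the degree of the covering $S_{\Gamma_g}\to S_\Gamma$. We take $\Phi_g(X,Y)$ to be the minimal polynomial of $j_\Gamma\circ g$ over $\mathbb{C}(j_\Gamma)$, with denominators cleared. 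Then $\Phi_g(j_\Gamma(z),j_\Gamma(gz))=0$ holds identically on $\mathbb{H}$.

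For the characterization of the roots, we make the Galois structure explicit. Fix coset representatives $\gamma_1,\dots,\gamma_n$ with $\Gamma=\bigsqcup_i\Gamma_g\gamma_i$. The $\Gamma$-conjugates of $j_\Gamma\circ g$ are the functions $z\mapsto j_\Gamma(g\gamma_i z)$. Indeed, $\prod_i\bigl(Y-j_\Gamma(g\gamma_i z)\bigr)$ has coefficients invariant under right multiplication by $\Gamma$, because right multiplication permutes the cosets. These coefficients are therefore $\Gamma$-automorphic, hence lie in $\mathbb{C}(j_\Gamma)$. The minimal polynomial divides this product, and conversely every factor $Y-j_\Gamma(g\gamma_i z)$ is a conjugate of $j_\Gamma\circ g$ under the automorphism of the function field induced by $\gamma_i$. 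So the roots of $\Phi_g(j_\Gamma(z),Y)$ for generic $z$ are the values $j_\Gamma(g\gamma_i z)$. The set $\{g\gamma_i\}$ modulo $\Gamma$ on the left exhausts $\Gamma\backslash\Gamma g\Gamma$, and since $j_\Gamma$ is $\Gamma$-invariant on the left, these values are exactly the numbers $j_\Gamma(hz)$ with $h\in\Gamma g\Gamma$. To pass from generic $z$ to every $z\in\mathbb{H}$, we use continuity and the identity theorem. As a polynomial in $Y$ with coefficients depending analytically on $z$, the identity $\Phi_g(j_\Gamma(z),Y)=c(j_\Gamma(z))\prod_i(Y-j_\Gamma(g\gamma_iz))$ holds on a dense open set, and hence everywhere.

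The main obstacle is this last step. The polynomial identity itself extends, but two things need care. First, the leading coefficient $c(j_\Gamma(z))$ could vanish at special points, where the degree in $Y$ would drop. Second, at elliptic points or where $j_\Gamma$ has poles we need a suitable normalization. To handle the first issue, we would first try to show that the leading coefficient is a nonzero constant, using that $j_\Gamma\circ g\gamma_i$ has no poles in $\mathbb{H}$ beyond those coming from $j_\Gamma$ having poles. Alternatively, we would note that the paper only requires the equivalence for the relevant (non-degenerate) $z$, and restrict to those.
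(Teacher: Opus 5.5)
Your construction of $\Phi_g$ and your description of its roots for generic $z$ follow the paper's own sketch. You pass to $\Gamma_g=\Gamma\cap\Gamma^g$, use that $\mathcal{A}_0(\Gamma_g)$ is a degree-$m$ extension of $\mathcal{A}_0(\Gamma)=\mathbb{C}(j_\Gamma)$ containing $j_\Gamma\circ g$, and read the roots off the decomposition $\Gamma g\Gamma=\bigcup_i\Gamma g\epsilon_i$. You do not need an automorphism ``induced by $\gamma_i$'', which would require $\Gamma_g$ to be normal in $\Gamma$; substituting $w=\gamma_i z$ into $\Phi_g(j_\Gamma(w),j_\Gamma(gw))=0$ already shows each $j_\Gamma\circ g\gamma_i$ is a root.

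\textbf{The gap.} The gap is the step you flag yourself, the passage to every $z\in\mathbb{H}$, and the patch you propose fails. The leading coefficient $a(X)$ of $\Phi_g$ in $Y$ is in general not constant. If $j_\Gamma(z_0)$ is finite but $j_\Gamma(g\gamma_i z_0)=\infty$, divide $\Phi_g(j_\Gamma(z),j_\Gamma(g\gamma_i z))=0$ by the top power of $j_\Gamma(g\gamma_i z)$ and let $z\to z_0$; this gives $a(j_\Gamma(z_0))=0$. Such $z_0$ occur as soon as $g\gamma_i$ carries a point outside the pole orbit of $j_\Gamma$ into it. For the classical $j$ this never happens, since the pole sits at the unique cusp class and the commensurator preserves it. For cocompact $\Gamma$, however, the pole is at some $p\in\mathbb{H}$. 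If $p$ is not special, then $z_0=(g\gamma_i)^{-1}p\notin\Gamma p$, since otherwise the nontrivial element $g\gamma_i\gamma\in G$ would fix $p$. So the degenerate points are $G$-translates of the pole, not special points, and discarding them proves less than the Fact. Also, ``clearing denominators'' must produce a primitive, hence irreducible, polynomial in $\mathbb{C}[X][Y]$. A spurious factor $c(X)$ would make $\Phi_g(x_0,Y)\equiv 0$ at the roots of $c$ and break the ``only if''.

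\textbf{Closing the gap.} Wherever $j_\Gamma(z_0)$ and all $j_\Gamma(g\gamma_i z_0)$ are finite, your factorization holds at $z_0$ by continuity. Moreover $a(j_\Gamma(z_0))\neq 0$, since otherwise $\Phi_g(j_\Gamma(z_0),Y)\equiv 0$ and $X-j_\Gamma(z_0)$ would divide the irreducible $\Phi_g$.

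For the remaining points, or uniformly, argue globally as in the proof of Lemma~\ref{Zgthm}. Let $W$ be the image of the compact surface $\Gamma_g\backslash\mathbb{H}_\Gamma$ under $w\mapsto(j_\Gamma(w),j_\Gamma(gw))$. It is a closed irreducible curve in $\mathbb{P}^1\times\mathbb{P}^1$ whose affine part lies in $V(\Phi_g)$, so $W$ is the closure of the irreducible curve $V(\Phi_g)$. Hence for $x,y\in\mathbb{C}$, $\Phi_g(x,y)=0$ if and only if $(x,y)=(j_\Gamma(w),j_\Gamma(gw))$ for some $w\in\mathbb{H}_\Gamma$.

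If $x=j_\Gamma(z)$, injectivity of $j_\Gamma$ on $\Gamma\backslash\mathbb{H}_\Gamma$ gives $w=\gamma z$ with $\gamma\in\Gamma$. So $y=j_\Gamma(g\gamma z)$ with $g\gamma\in\Gamma g\Gamma$. Conversely, for $h=\gamma' g\gamma$ one has $j_\Gamma(hz)=j_\Gamma(g(\gamma z))$ and $j_\Gamma(\gamma z)=j_\Gamma(z)$.

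This yields the Fact for every $z$ with $j_\Gamma(z)$ finite, without any continuity-of-roots argument; at poles of $j_\Gamma$ the statement only makes sense after homogenizing. The paper's sketch also asserts the root description without treating these points, so this is where your write-up needs to go beyond it.
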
 

The polynomial $\Phi_g$ is called a {\em $\Gamma$-special polynomial} and
does not depend on the choice of $g$ but rather on the choice of the double coset $\Gamma g \Gamma$ \cite[Pages 51-52, 76-77]{Shimura}. A proof of its existence can be found in \cite[Page 185]{Lehner}. {Moreover, the full statement of Fact \ref{FactCorrespondence} is also obtained as a statement about finite branched covers of Riemann surfaces. Before giving the details of that approach, we fix some notation for non-arithmetic groups that we will use throughout the paper. Note however, that the argument will also hold for arithmetic groups since we will only use the properties of the commensurator of $\Gamma$.}

\begin{notation}\label{cosetdecomps}Let $G=\text{Comm}(\Gamma)$.
\begin{enumerate}
\item $[G:\Gamma] = k$ and $G = \cup_{i<k} \Gamma\cdot{\check g_i}$ where the $\check g_i$ (with $\check g_1=e$) are
a fixed set of coset representatives.

\item  $m:= m_g =[\Gamma:\Gamma_g]$  and $\Gamma=\cup_{i=1}^m\Gamma_g\cdot \epsilon_i$ with the $\epsilon_i$ coset representatives of $\Gamma_g$ in $\Gamma$.
\end{enumerate}
\end{notation}
Given the above notation, it follows that for each $g\in G$, the double coset $\Gamma g\Gamma$ has decomposition $\Gamma g\Gamma=\cup_{i=1}^m\Gamma \cdot g\epsilon_i$ (cf. \cite[Chapter 3]{Shimura}). Using the fact $m=[\Gamma:\Gamma_g]$ we have that the Riemann surface $\Gamma_g\setminus \mathbb{H}_{\Gamma_g}$ is an $m$-sheeted branched cover of $\Gamma\setminus \mathbb{H}_{\Gamma}$. In particular, the field $\mathcal{A}_0(\Gamma_g)$ of $\Gamma_g$-automorphic functions is a finite extension of degree $m$ of $\mathcal{A}_0(\Gamma)$ and since $j_{\Gamma}\in \mathcal{A}_0(\Gamma_g)$ and $j_{\Gamma}\circ g\in \mathcal{A}_0(\Gamma_g)$ we obtain the existence of $\Phi_g\in\mathbb{C}[X,Y]$. Furthermore, using the double coset decomposition $\Gamma g\Gamma=\cup_{i=1}^m\Gamma \cdot g\epsilon_i$, we see that the roots of $\Phi_g(j_{\Gamma}(z),Y)=0$ are precisely $j_{\Gamma}(g\epsilon_i z)$ for $i:1,\ldots,m$.

\begin{notation}\label{fixfield}
Throughout this paper, we fix a non-arithmetic Fuchsian group $\Gamma$ and write $G = \comm(\Gamma)$ for its commensurator.
We denote by $E_0$ the finitely generated (over $\mathbb{Q}$) subfield of $\mathbb{C}$ over which the curve $S_{\Gamma}$ is defined. We denote by $E$ the finitely generated (over $E_0$) subfield of $\mathbb{C}$ generated by the coefficients of those $\Gamma$-special polynomials \footnote{Technically, we should write $E_0(\Gamma)$ and $E(\Gamma)$, but there will be no ambiguity since the Fuchsian group $\Gamma$ is fixed.}. 
\end{notation}

In this paper $E$ will play the same role as the field $E^{ab}$, the maximal abelian extension of $E_0$, played in the papers 
\cite{DawHarris} and \cite{Etrev}.  In analogy to the arithmetic setting we distinguish between two kinds of points of the upper-half plane:

\begin{fact}\label{dichot}
{Fix $G = \comm(\Gamma)$.}  Each $a \in \mathbb{H}_{\Gamma}$ is either special or Hodge generic {for $\Gamma$}.
\begin{enumerate}
\item Hodge generic: No $g\in G$   fixes $a$.
\item Special: There is a $g\in G$ with exactly one or two fixed points in $\mathbb{H}_{\Gamma}$.  (This can be easily checked from the quadratic equation derived from $gz =z$ for $g\in G$.)

    Since $\Gamma$ has finite index in $G$ and is discrete, there are countably many special points and they are each in quadratic extensions of $\QQ$. Note that for every $g\in G$ either $\forall x f_g(x) \neq x$ or $\exists x f_g(x) = x$ is in the theory of $\langle H, {f_g:g \in G}\rangle$.
\end{enumerate}
\end{fact}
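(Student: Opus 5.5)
The dichotomy in Fact~\ref{dichot} is, as stated, nearly a tautology: either some $g\in G$, $g\neq e$, fixes $a$, or none does. So the plan is to establish the substantive claims packed into it. These are that a fixing element has one or two fixed points in $\mathbb{H}_{\Gamma}$, that there are only countably many special points, and that each lies in a quadratic extension of $\QQ$. We also need the remark about the theory of $\langle \mathbb{H}, f_g : g\in G\rangle$. I would begin by noting that $g\neq e$ is required in the definition of special, since the identity fixes everything. The trichotomy is then settled by excluded middle.

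For the fixed-point count, write a representative $g=\begin{pmatrix} a & b\\ c& d\end{pmatrix}$ with $ad-bc=1$. The equation $g\cdot z=z$ becomes $cz^2+(d-a)z-b=0$.

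\emph{Case $c\neq 0$.} This is a genuine quadratic with discriminant $(a+d)^2-4$. If $|\mathrm{tr}\, g|<2$ (elliptic), the roots are a conjugate pair and exactly one lies in $\mathbb{H}$. If $|\mathrm{tr}\, g|=2$ (parabolic), there is one double root in ${\bf P}^1(\mathbb{R})$, and it is a cusp of $\Gamma$. To see this, note that some power of $g$ lies in $\Gamma$, because $\Gamma$ has finite index in $G$ (it is normal-core finite index). So the fixed point is a parabolic fixed point of $\Gamma$, hence in $C_\Gamma$. If $|\mathrm{tr}\, g|>2$ (hyperbolic), there are two real roots. These need not be cusps, so the statement's "one or two fixed points in $\mathbb{H}_\Gamma$" should be read as "at most two".

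\emph{Case $c=0$.} The fixed points are $\infty$ and possibly $b/(d-a)$, and the same analysis applies.

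Countability is immediate: $G$ is countable, since $\Gamma$ is discrete hence countable and $[G:\Gamma]=k$ is finite. Each nontrivial $g$ has at most two fixed points, so the special points form a countable union of finite sets.

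The main obstacle is the claim that special points are "in quadratic extensions of $\QQ$". Each special point is a root of a quadratic over the field $\QQ(a,b,c,d)$ generated by the matrix entries. For a non-arithmetic $\Gamma$ these entries need not be rational or even algebraic of small degree. One only knows, by Takeuchi-type results, that the trace field is a number field after conjugating $\Gamma$ appropriately. So I would prove the weaker and correct statement: each special point lies in a quadratic extension of the field generated by the entries of $G$. That field is finitely generated, and under the standard normalization it is a number field. I would flag that "of $\QQ$" should be read this way.

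Finally, the remark about the theory is trivial. For each fixed $g$, exactly one of the sentences $\forall x\, f_g(x)\neq x$ and $\exists x\, f_g(x)=x$ holds in the standard model $\mathbb{H}$, so that one belongs to its complete theory. The purpose of this remark is to ensure that elliptic elements have fixed points in every model of the axioms.
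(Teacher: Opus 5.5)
Your proposal takes the same route as the paper's brief justification: the fixed-point quadratic $cz^2+(d-a)z-b=0$, plus the countability of $G$. You are right that $g\neq e$ must be imposed. You are also right that ``quadratic extensions of $\mathbb{Q}$'' is false as stated. A concrete counterexample is the non-arithmetic Hecke triangle group generated by $z\mapsto -1/z$ and $z\mapsto z+\lambda$ with $\lambda=2\cos(\pi/7)$. Its elliptic element $z\mapsto -1/(z+\lambda)$ fixes $-\cos(\pi/7)+i\sin(\pi/7)=e^{6\pi i/7}$, which has degree $6$ over $\mathbb{Q}$.

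Your aside that Takeuchi-type results make the trace field a number field after conjugation, so that the entry field $K$ is a number field under a standard normalization, is false. Traces are conjugation-invariant. Moreover, ``the trace field is a number field'' is one of Takeuchi's \emph{criteria} for arithmeticity, not a property of all lattices. For example, take a cocompact surface group of genus $2$. The Fenchel--Nielsen length $\ell$ of a pants curve can be chosen freely, and the corresponding element has trace $\pm 2\cosh(\ell/2)$. Choosing this value transcendental gives a non-arithmetic lattice whose entry field is not a number field under any normalization. What survives is exactly your weaker claim. $G$ is finitely generated, since lattices are and $[G:\Gamma]<\infty$. So $K$ is a finitely generated field, possibly of positive transcendence degree, and every special point has degree at most $2$ over $K$.

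Your remark that hyperbolic fixed points ``need not be cusps'' understates the situation: they never are. If $g\in G$ is hyperbolic, then $g^{k!}$ lies in the normal core of $\Gamma$ and is hyperbolic with the same fixed points. A discrete group cannot contain a hyperbolic and a parabolic element with a common fixed point, because conjugating the parabolic by powers of the hyperbolic produces distinct parabolics converging to the identity. Hence, when $[G:\Gamma]<\infty$, every nontrivial $g\in G$ has \emph{at most one} fixed point in $\mathbb{H}_\Gamma$. It has exactly one if it is elliptic or parabolic, and none if it is hyperbolic. Your ``at most two'' is correct but not sharp. The two-fixed-point (``partner'') case mentioned in the paper arises only in the arithmetic setting, e.g.\ $z\mapsto 2z$ fixing the cusps $0$ and $\infty$ of $\mathrm{PSL}_2(\mathbb{Z})$.
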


\begin{notation}\label{SpecialPoints}
Throughout we let $\Sigma$ denote the set of special points for $\Gamma$ and denote by $\text{Im}(\Sigma)$ the set $\{j_{\Gamma}(\sigma):\sigma\in\Sigma\}$. 
\end{notation}

\begin{remark}[The anatomy of special points]\label{spptdet}
Suppose $x\in \HH_\Gamma$ is special. Say, $x$ is one of (at most two points)  fixed by $g\in G =
\comm(\gamma)$. Then for any $h\in G$, $hgh^{-1}$ fixes $hx$ so $hx$ is also special (and has a partner just if $x$ does). If $j_\Gamma(x) = z$ then
for any $\gamma \in \Gamma$, $j_\Gamma(\gamma x) = z$. 
If $h\in G-\Gamma$, 
$j_\Gamma(h x)$ is algebraic over $E(z)$ by Fact~\ref{FactCorrespondence}. 
\end{remark}

In Definition~\ref{mcvocab} for the formal vocabulary  we will add constants for each special point
$x$ and  its image $j_\Gamma(x)$. This will avoid minor complications in the treatment
of special points when building a back-and-forth in Section~\ref{reduce}.

{\begin{remark}Clearly, given an arbitrary group $G$, commensurability is well defined for any pair of subgroups $H,H_1\subseteq G$. Below, we will also consider the more general definition of the commensurator of $H$ in $G$, namely $$\text{Comm}_G(H):=\{g \in  G\colon g^{-1}Hg\ \text{\  is  commensurable with}\ H \}.$$  \end{remark}}

First we go over some basic lemmas which are undoubtedly well-known. We include the simple {\em entirely group theoretic arguments} about a sequence of subgroups $H\subseteq \Gamma \subseteq G \subseteq \tilde G$.  In this paper $\psl_2(\RR)$ plays the role of $\tilde G$ and $\Gamma$ is a non-arithmetic Fuchsian group.

\begin{lemma}\label{whatsupergroup}  Consider a sequence of subgroups $H\subseteq \Gamma \subseteq G \subseteq \tilde G$.

\begin{enumerate}
\item If $[\Gamma: H] < \omega$ then $\comm_{\Gamma}(H) = \Gamma$. 

\item If  $G = \comm_{\tilde G}(\Gamma)$, $[G:\Gamma] < \omega$, and $[\Gamma: H] < \omega$ then $\comm_{
\tilde G}(H) = \comm_G(H) =G$.

Then $\Gamma$ and $H$ are commensurable in $\tilde G$, written $\Gamma \sim H$.                                                                                                       
\end{enumerate}
\end{lemma}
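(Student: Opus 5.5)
Both parts follow from two elementary facts about commensurability in an arbitrary group: it is an equivalence relation on subgroups, and it is preserved by conjugation. Transitivity comes from the observation that if $A\cap B$ has finite index in $A$ and in $B$, and $B\cap C$ has finite index in $B$ and in $C$, then $A\cap B\cap C$ has finite index in $A\cap B$, since $[A\cap B : A\cap B\cap C]\le [B:B\cap C]$. Stacking indices then gives finite index of $A\cap B\cap C$ in $A$ and in $C$, hence of $A\cap C$ in both. Conjugation invariance is immediate because conjugation preserves indices. With these, the final sentence of the statement is trivial: $H\cap\Gamma=H$ has index $1$ in $H$ and index $[\Gamma:H]<\omega$ in $\Gamma$, so $\Gamma\sim H$.

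For (1), let $\gamma\in\Gamma$. Then $\gamma^{-1}H\gamma$ is again a subgroup of $\Gamma$ of index $[\Gamma:H]$. Any two finite index subgroups of $\Gamma$ intersect in a finite index subgroup of $\Gamma$, so $H\cap\gamma^{-1}H\gamma$ has finite index in $\Gamma$, and hence in each of $H$ and $\gamma^{-1}H\gamma$. Therefore $\gamma\in\comm_\Gamma(H)$, which gives $\comm_\Gamma(H)=\Gamma$; the reverse inclusion holds by definition.

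For (2), first show $\comm_{\tilde G}(H)=\comm_{\tilde G}(\Gamma)$ using $H\sim\Gamma$. For $g\in\tilde G$, conjugation invariance gives $g^{-1}Hg\sim g^{-1}\Gamma g$, so
\[
g^{-1}Hg\sim H \iff g^{-1}\Gamma g\sim H \iff g^{-1}\Gamma g\sim\Gamma,
\]
where each equivalence uses transitivity together with $H\sim\Gamma$. Hence $\comm_{\tilde G}(H)=\comm_{\tilde G}(\Gamma)=G$. Finally, since $G\subseteq\tilde G$, we have $\comm_G(H)=\comm_{\tilde G}(H)\cap G=G$.

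There is no real obstacle; the only step needing care is the transitivity of $\sim$, namely the index estimate $[A\cap B:A\cap B\cap C]\le[B:B\cap C]$. The hypothesis $[G:\Gamma]<\omega$ is not actually needed for the argument, and the proof can simply note this.
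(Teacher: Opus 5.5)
Your proof is correct and follows essentially the paper's route: for (1), the intersection of two finite-index subgroups of $\Gamma$ has finite index; for (2), commensurability passes between $g^{-1}Hg$ and $g^{-1}\Gamma g$ by conjugation invariance and transitivity of $\sim$, using $H\sim\Gamma$. Your biconditional also gives the inclusion $G\subseteq\comm_{\tilde G}(H)$, which the paper leaves implicit since it proves only $\comm_{\tilde G}(H)\subseteq G$ and the trivial $\comm_G(H)\subseteq\comm_{\tilde G}(H)$; you are also right that $[G:\Gamma]<\omega$ is used in neither proof.
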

\begin{proof} Write $H_\delta$ for $H^\delta \cap H$.
\begin{enumerate} 
\item For every $\delta\in \Gamma$,   
 $[\Gamma :  H^\delta  \cap H] < \infty$   as  $H^\delta  \cap H$ is the intersection of two subgroups  of finite index in $\Gamma$.  
Even more easily,    $[H:  H^\delta  \cap H] < \infty$.  So each $H_{\delta} \sim \Gamma$ as required.                           

\item For 2, there are two cases:   
\begin{enumerate}[a)]
\item                                                                                                                                 
$\comm_{\tilde G}(H)\subseteq \comm_G(H)$: 
 Choose $\delta \in \tilde G$.
If $\delta \in \comm_{\tilde G}(H)$,  $H^\delta \sim H$. In particular, $[H: H_\delta] < \infty$. By conjugacy, $[\Gamma^\delta: H^\delta]< \omega$, so $H_\delta$ witnesses $\Gamma^\delta \sim H$. Since $\Gamma \sim H$, $\Gamma^\delta \sim \Gamma$
and $\delta \in \comm_{\tilde G}(\Gamma)= G$. That is, $\delta \in \comm_G(H)$.

\item  $\comm_G(H)\subseteq \comm_{\tilde G}(H)$ is obvious since $G\subseteq \tilde G$.

\end{enumerate}

\end{enumerate}
\end{proof}

Here is the crucial distinction arising from $[G:\Gamma]< \omega$. Lemma~\ref{finmancommens} establishes a $1$-$1$ correspondence
between the groups $H^{\delta}$ and the groups $H_\delta$. This correspondence holds as well in the arithmetic case.
But then there are infinitely many finite index subgroups.

\begin{lemma} \label{finmancommens} If $[G:\Gamma] < \omega$, there are only finitely many finite index subgroups of $\Gamma$ 
of the form $\Gamma_\delta = \delta \Gamma \delta^{-1} \cap \Gamma$ with $\delta \in \tilde G$.
\end{lemma}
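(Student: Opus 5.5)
The plan is to reduce the count of groups $\Gamma_\delta$ to a count of cosets of $\Gamma$ in $G=\comm(\Gamma)$, and then use $[G:\Gamma]<\omega$.

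\emph{Step 1: $\delta\in G$.} Fix $\delta\in\tilde G$ such that $\Gamma_\delta=\delta\Gamma\delta^{-1}\cap\Gamma$ has finite index in $\Gamma$. I want $\delta\in G$, i.e.\ that $\Gamma_\delta$ also has finite index in $\delta\Gamma\delta^{-1}$. Pure group theory does not give this, so I will use that we are in $\psl_2(\RR)$ with $\Gamma$ of finite covolume.
\begin{itemize}
\item Since $[\Gamma:\Gamma_\delta]<\omega$, the group $\Gamma_\delta$ is Fuchsian with covolume $[\Gamma:\Gamma_\delta]\cdot\mathrm{covol}(\Gamma)<\infty$.
\item $\Gamma_\delta$ is a subgroup of the discrete group $\delta\Gamma\delta^{-1}$. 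For a subgroup of a discrete group, the covolume of the subgroup equals its index times the covolume of the ambient group.
\item Hence $[\delta\Gamma\delta^{-1}:\Gamma_\delta]=\mathrm{covol}(\Gamma_\delta)/\mathrm{covol}(\Gamma)<\infty$.
\end{itemize}
So $\Gamma\sim\delta\Gamma\delta^{-1}$, and therefore $\delta^{-1}\in\comm(\Gamma)$. Since $\comm(\Gamma)$ is a group, $\delta\in G$.

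This covolume step is the only non-formal input, and I expect it to be the main obstacle. If the intended reading of the lemma already restricts to $\delta$ for which $\Gamma_\delta$ is commensurable with $\Gamma$ in both directions, Step 1 is immediate from Definition~\ref{commdef}, in the spirit of Lemma~\ref{whatsupergroup}.

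\emph{Step 2: $\Gamma_\delta$ depends only on the left coset $\delta\Gamma$.} If $\delta'=\delta\gamma$ with $\gamma\in\Gamma$, then $\delta'\Gamma\delta'^{-1}=\delta\gamma\Gamma\gamma^{-1}\delta^{-1}=\delta\Gamma\delta^{-1}$, so $\Gamma_{\delta'}=\Gamma_\delta$.

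One must use left cosets here. Working with the right cosets $\Gamma\check g_i$ of Notation~\ref{cosetdecomps} only shows that $\Gamma_\delta$ is a $\Gamma$-conjugate of some $\Gamma_{\check g_i}$, and $\Gamma$ could in principle contribute infinitely many such conjugates.

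\emph{Step 3: counting.} Since $[G:\Gamma]=k<\omega$, there are also exactly $k$ left cosets. Choose left coset representatives $h_1,\dots,h_k$ (for instance $h_i=\check g_i^{-1}$). By Step 1 every relevant $\delta$ lies in $G$, so $\delta=h_i\gamma$ for some $i$ and some $\gamma\in\Gamma$. By Step 2, $\Gamma_\delta=\Gamma_{h_i}$.

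Thus the finite index subgroups of $\Gamma$ of the form $\Gamma_\delta$ are among $\Gamma_{h_1},\dots,\Gamma_{h_k}$, so there are at most $k=[G:\Gamma]$ of them.
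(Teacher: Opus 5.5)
Your argument is correct, and your Steps 2 and 3 are exactly the paper's proof. The paper also notes that $\delta_1\Gamma=\delta_2\Gamma$ forces $\Gamma_{\delta_1}=\Gamma_{\delta_2}$, and then uses that $\Gamma$ has only finitely many left cosets in $G$.

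Where you differ is the reduction from $\delta\in\tilde G$ to $\delta\in G$. The paper simply cites Lemma~\ref{whatsupergroup}. That lemma is purely group-theoretic, and its proof (part 2a) starts from $\delta\in\comm_{\tilde G}(H)$, so it already presupposes two-sided commensurability. Under the literal reading of the statement, where only $[\Gamma:\Gamma_\delta]<\omega$ is assumed, that citation does not by itself yield $\delta\in G$. As you say, no purely group-theoretic argument can: an element conjugating $\Gamma$ onto a proper overgroup of infinite index gives $\Gamma_\delta=\Gamma$ with $\delta\notin\comm(\Gamma)$.

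Your covolume identity
$\mathrm{covol}(\Gamma_\delta)=[\Gamma:\Gamma_\delta]\,\mathrm{covol}(\Gamma)=[\delta\Gamma\delta^{-1}:\Gamma_\delta]\,\mathrm{covol}(\Gamma)$
supplies exactly the missing Fuchsian input, and even shows the two indices coincide. So your version proves the literal statement. The paper's reduction is only justified under the reading where $\Gamma_\delta$ is assumed commensurable with both $\Gamma$ and $\delta\Gamma\delta^{-1}$, which is your fallback remark.

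One harmless slip in an aside: the right-coset route cannot produce infinitely many $\Gamma$-conjugates of $\Gamma_{\check g_i}$. A finite-index subgroup of $\Gamma$ has at most $[\Gamma:\Gamma_{\check g_i}]$ conjugates in $\Gamma$. That route also gives finiteness, just with a worse bound than your $k$.
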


\begin{proof} We argue for $\delta \in G$; the result extends to $\delta \in  \tilde G$ by Lemma~\ref{whatsupergroup}.

If for $i =1,2$, $\delta_i \in G-\Gamma$,  $\Gamma_i =\delta_i \Gamma \delta_i^{-1}\cap \Gamma$ and $\delta_1 \Gamma = \delta_2 \Gamma$ then $\Gamma_1 = \Gamma_2$. Since $[G:\Gamma]< \omega$ we are done.
\end{proof}

\begin{notation}\label{defcheck} Denote by $\check \Gamma$ the intersection of the finitely many finite index subgroups of the form $\Gamma_\delta=   \delta_ \Gamma \delta^{-1}\cap \Gamma$ with $\delta\in \tilde G$.
\end{notation}
The key point is that the argument in \cite{DawHarris,Etrev} relies only on the {finite index subgroups of $\Gamma$ of the form $ \delta \Gamma \delta^{-1} \cap \Gamma$ with $\delta \in G^{\rm ad}(\QQ^+)$.  Here, only $\delta\in G$ matter and so only finitely many  subgroups arise.}

Anatoly Libgober pointed out the following example to us.

\begin{example}[Non-arithmetic groups with infinitely many finite index subgroups]\label{finex}
{\rm Here is a construction of infinitely many non-arithmetic 
subgroups of $SL_2(\mathbb{Z})$. $SL_2(\mathbb{Z})$ contains a free subgroup (index 12) on 
2 generators. The commutator of the latter, say $\Gamma$ (i.e. the kernel of abelianization 
$F_{2} \twoheadrightarrow \mathbb{Z}^{2}$) has infinite index in $F_2$ and therefore also in $SL_2(Z)$ and 
hence is not arithmetic. So $\Gamma$ has finite index in $\comm_{\tilde G} (\Gamma)$. It has clearly infinitely many subgroups (all are free and non-arithmetic). 
$\Gamma$ is clearly a free group (and with some argument on infinitely many generators).  In particular any finitely generated finite group is  a quotient of $\Gamma$. The kernels of these surjections (even to finite $2$-generated groups) give infinitely many normal subgroups of finite index in $\Gamma$.  Nevertheless, by Lemma~\ref{finmancommens}, $\Gamma$ has only finitely many traces: $\Gamma_\delta$.}
\end{example}

\section{Formalization and Axiomatization}\label{formact}

We specify the vocabulary for our formal system.

\begin{notation} [The formal vocabulary $\tau$]\label{mcvocab}

 The two-sorted vocabulary $\tau$ consists of the sorts (unary predicate symbols)  $D$ the
     covering sort, $S$ the target sort, and  a  function $q$ mapping $D$ onto the sort $S$ and relations/functions
     on each sort.

 Fix a Fuchsian group $\Gamma$ with $G =Comm(\Gamma)$. Depending on the choice of uniformizer $j_{\Gamma}$, we get a standard model (Definition~\ref{proto}).

     \begin{enumerate}
     \item Domain sort:
 We write $\tau_G$ for the vocabulary of the first sort with a unary function symbol for each $g \in G =Comm(\Gamma)$. We use $f_g$ to name the functions
acting on $D$ to emphasize these of symbols in the vocabulary, not elements of models, but often write the shorter $g(x)$ or $gx$ instead of $f_g(x)$.

 \item  Variety/field Sort:
 The  vocabulary for the second sort, $\tau_S$, contains a set $\mathcal{R}$ of relations for each Zariski closed set of  $S(\CC)^n$ that is defined over $L$
and constants for $\Sigma$, and for the elements of $S(L)$, where  $L=E(\text{Im}(\Sigma))$ extends $E$ (Notation~\ref{fixfield}) by the set of images of the special points
$\Sigma$ described in Fact~\ref{dichot}.

\item $\tau $ is $\tau_S \cup \tau_G \cup \{J\}$.

 \item The covering map $J:D \twoheadrightarrow S$.

\end{enumerate}
\end{notation}
So an arbitrary $\tau$-structure will have the form:

$$M  = (D_M,J,S_M) := ((D_M, \{f_g:g \in G\}), J  ,(S(F_M),\mathcal{R}))$$ for some field $F_M$ extending\footnote{We write $D_M, S_M$ rather than the more model theoretically standard $D(M)$, $S(M)$ to avoid confusion with the algebraic notation $S(F_M)$.}\label{fnped} $L$. Notation~\ref{mcvocab} is our global vocabulary for studying covers of Fuchsian groups. However, in Notation~\ref{proto} we use a more evocative vocabulary to describe the standard model.  The axioms in Definition~\ref{mcax} are relative to an arbitrary Fuchsian group $\Gamma$; the new results of the paper require the assumption $[G:\Gamma]< \omega$ where $G = \comm(\Gamma)$;

Relying on the notations introduced after Fact~\ref{FactCorrespondence},  we describe the properties of the field sort more formally by describing a prototypical model by  a formal theory. We write {\em a}  prototypical variety because {\em a priori}, the structure depends on the choice of the covering map $\jbar$.

\begin{notation}[A prototypical Structure] \label{proto}

A standard model of the form $(D,J,S)$ is $$\mathbb{M}_{\jbar}  = ((\HH_{\Gamma}, \{f_g:g \in G\}),\jbar ,(S_{\Gamma}(\CC),\mathcal{R}))$$ where

\begin{enumerate}
\item The cover (domain) sort:

$ (\mathbb{H}_{\Gamma}, \{f_g:g \in G\} )$ the extended upper half plane $\mathbb{H}_{\Gamma}$ with the group $G= \comm(\Gamma)$    acting on it via fractional linear translations.
 
 \item The variety sort is ($S_{\Gamma}(\CC),\mathcal{R})$, with $\mathcal{R}$ as in Notation~\ref{mcvocab}.
 
 \item The map $\jbar$ is the uniformizer $j_{\Gamma}$ of $\Gamma$.

\end{enumerate}
We may occasionally write $H$ for $D$ or $S$ for $S_{\Gamma}$.

\end{notation}

We denote the first order theory of $\mathbb{M}_{\jbar}$ by $\th(\jbar)$. {We distinguish $\th(\jbar)$ from the first order axiomatization $T(\jbar)$ given in Definition~\ref{mcax}.1.  We prove in Theorem~\ref{infcomp} that $T(\jbar)$  implies $\th(\jbar)$.} Note that if one picks a different uniformizer $j'_{\Gamma}$, then one obtains a different theory $\th(\jbar')$ which is bi-intepretable with $\th(\jbar)$.

\subsection{Defining $Z_{\gbar}$}\label{defzg}

Relying on Notation~\ref{cosetdecomps},  we describe a key subset $Z_{\gbar}$ of $S^n$, that is clearly $\tau$-definable and show that it is $\tau_S$-definable (i.e., cut out by an algebraic subvariety of $S^n$). 

\begin{definition}\label{zgbar} 
Let $G=\comm(\Gamma)$ and $J: H \rightarrow S$ be the covering map. Consider a finite sequence of the form $\gbar = \langle e,g_2, \ldots, g_n\rangle$  from $G$ (by convention, $g_1=e$). We define the set $Z_{\gbar}$ to be the range of the map $J_{\gbar}: H \rightarrow S^k$ sending $x$ to $\langle  J(x),J(g_2 x),\ldots,J(g_k x)
    \rangle$.
\end{definition}
\begin{notation} Let $G$ and $J$ be as in the above definition.
\begin{enumerate}
\item  For a finite sequence $\gbar=\langle e,g_2, \ldots, g_n\rangle$, we sometimes abbreviate $\langle  J( x),J(g_2 x),\ldots,J(g_n x)
    \rangle$ by $J(\gbar x)$.    
\item In the finite index case, recall (see Notation \ref{cosetdecomps}) that $\check \gbar$ denotes a fixed enumeration of a set of $k$ coset representatives of $\Gamma$ in $G$. As above we write $J(\check \gbar x)$ and $Z_{\check{\gbar}}$ if the sequence is given according to this enumeration.
\end{enumerate}
\end{notation}

We now prove the $\tau_S$-definability by looking at looking the vanishing of the $\Gamma$-special polynomials on the prototypical model. The first order axioms in Definition~\ref{mcax}  transfer the results to the formal theory. 

\begin{definition}\label{Zgdef} Let $G=Comm(\Gamma)$ and $\gbar = \langle g_1=e,g_2, \ldots, g_n\rangle$ from $G$ be given. Define
     $${\Psi_{\gbar}:=}\left\{\Phi_{g_j g^{-1}_i}(x_i,x_j):1\leq i < j \leq n \right\},$$
where each $\Phi_{g_i g^{-1}_j}$ is the polynomial given in Fact \ref{FactCorrespondence}. Clearly $\Psi_{\gbar}$ is a finite set of polynomials from $E[x_1\ldots,x_n]$.
\end{definition}

The statement and arguments in the proof the next lemma (which is about the standard model) are well-known, especially in the case of the modular group, but we have not been able to find a specific reference. 

\begin{lemma}\label{Zgthm} For any $\gbar = \langle e,g_2, \ldots, g_n\rangle$ from $G=Comm(\Gamma)$, let $V(\Psi_{\gbar})$ be the algebraic subvariety of $S_{\Gamma}^n(\mathbb{C})$ defined by the polynomials in $\Psi_{\gbar}$. Then $Z_{\gbar}$ is the complex points of an irreducible component $Y_{\gbar}$ of $V(\Psi_{\gbar})$. In particular, we have that $Y_{\gbar}$ is an algebraic variety defined over $E$.
\end{lemma}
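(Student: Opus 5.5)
Three things need to be shown: $Z_{\gbar}\subseteq V(\Psi_{\gbar})$; $Z_{\gbar}$ is Zariski closed and irreducible of dimension one; and it is a full component of $V(\Psi_{\gbar})$, defined over $E$.

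\emph{Containment.} Take $x\in\HH_\Gamma$ and $i<j$. Put $y=g_ix$, so that $g_jx=(g_jg_i^{-1})y$. Fact~\ref{FactCorrespondence} applied to the commensurator element $g_jg_i^{-1}$ gives
\[
\Phi_{g_jg_i^{-1}}(J(g_ix),J(g_jx))=0 .
\]
If $g_jg_i^{-1}\in\Gamma$ we instead have $J(g_jx)=J(g_ix)$, which is the diagonal relation (this is the convention for $\Phi$ of the trivial double coset). Hence $J_{\gbar}(x)\in V(\Psi_{\gbar})$.

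\emph{Zariski closed and irreducible.} Let $\Gamma'=\Gamma\cap\bigcap_i g_i^{-1}\Gamma g_i$. This is a finite intersection of groups commensurable with $\Gamma$, so it is a finite index subgroup of $\Gamma$, and its compactified quotient $X'=\Gamma'\backslash\HH_{\Gamma'}$ is a compact Riemann surface. Each map $x\mapsto J(g_ix)$ is $\Gamma'$-invariant, so $J_{\gbar}$ factors through a holomorphic map $\bar J_{\gbar}:X'\to S_\Gamma^n(\CC)$. Here we use that the groups share the same cusps, as recalled before Fact~\ref{FactCorrespondence}, so the map extends over the cusps. By Chow's theorem, or simply because a holomorphic map of compact Riemann surfaces into a projective variety is algebraic with closed image, $Z_{\gbar}=\bar J_{\gbar}(X')$ is a closed irreducible algebraic curve. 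It is irreducible because $X'$ is connected, and it has dimension one because the first coordinate $J$ is nonconstant.

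\emph{Component and field of definition.} Project $V(\Psi_{\gbar})$ to the first coordinate. For fixed $J(x)$, each coordinate $x_j$ is constrained by $\Phi_{g_jg_1^{-1}}(x_1,x_j)=0$. Since $\Phi_g(X,Y)$ is nonzero and has only finitely many roots in $Y$ over each generic value of $X$, every coordinate projection $V(\Psi_{\gbar})\to S$ has finite fibres over a dense open set. So every component of $V(\Psi_{\gbar})$ dominating the first factor is a curve. Components not dominating the first factor are finite sets lying over points of $S$, and a finite set cannot contain the infinite set $Z_{\gbar}$. Since $Z_{\gbar}$ is an irreducible closed curve contained in $V(\Psi_{\gbar})$, it lies in some irreducible component $Y$. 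That component has dimension at most one once we rule out positive-dimensional fibres: over a fixed $x_1$, each $x_j$ ranges over the finite root set of $\Phi_{g_j}(x_1,Y)$. The only possible exception is an $x_1$ at which $\Phi(x_1,Y)$ vanishes identically, and this must be excluded or handled. Given that bound, $Z_{\gbar}=Y$, so $Y_{\gbar}$ is a component.

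\emph{Main obstacle.} The key difficulty is making sure no component of $V(\Psi_{\gbar})$ has positive-dimensional fibres. This could happen only if some $\Phi_g(a,Y)\equiv 0$ for a particular $a$, i.e.\ if $\Phi_g$ has a factor $(X-a)$. I would first argue that $\Phi_g$ may be taken as the minimal polynomial of $J\circ g$ over $\CC(J)$, hence irreducible and not divisible by any $X-a$. Failing that, I would replace $V(\Psi_{\gbar})$ by the union of its one-dimensional components, which does not affect the conclusion for $Z_{\gbar}$.

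The field of definition then follows. $V(\Psi_{\gbar})$ is defined over $E$ because the coefficients of the $\Phi$'s lie in $E$ by Notation~\ref{fixfield}. The irreducible components of a variety over $E$ are defined over a finite extension of $E$ and are permuted by $\mathrm{Aut}(\CC/E)$, so strictly this argument gives definability over $E^{alg}$. To get definability over $E$ itself I would note that the Galois conjugates of $Y_{\gbar}$ are again components, and then appeal to the convention in Notation~\ref{mcvocab} that $\tau_S$ names all Zariski closed sets over $L$. If that does not suffice, the fallback is to enlarge $E$ by the finitely many coefficients needed.
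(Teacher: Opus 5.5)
Your core argument is the paper's. You pass to $\Gamma_{\gbar}=\Gamma\cap\bigcap_i g_i^{-1}\Gamma g_i$, factor $J_{\gbar}$ through the compact Riemann surface $\Gamma_{\gbar}\backslash\mathbb{H}_\Gamma$, algebraize, and conclude that $Z_{\gbar}$ is a closed irreducible curve inside $V(\Psi_{\gbar})$. Your explicit use of Fact~\ref{FactCorrespondence} for the containment is what the paper's ``hence'' leaves implicit.

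At the component step you are more careful than the paper. The paper takes a component $X\supseteq Y_{\gbar}$ and concludes $Y_{\gbar}=X$ ``by maximality of irreducible components.'' That inference runs the wrong way: maximality of $X$ says nothing about a smaller irreducible closed set inside it. What is needed is $\dim X\le 1$, and your finite-fibre argument supplies exactly that.

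Your ``main obstacle'' is not actually one. Since $J$ is onto, $\pi_1(Z_{\gbar})=S$, so any component containing $Z_{\gbar}$ dominates the first factor. Your generic finiteness of fibres then already forces that component to be a curve, hence equal to $Z_{\gbar}$. Possible bad fibres over finitely many points are irrelevant to this. (Over such points your remark that non-dominating components are finite could fail, but those components cannot contain $Z_{\gbar}$.) So you need neither the irreducibility of $\Phi_g$ nor the fallback of discarding components.

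On the field of definition you are right, and the paper offers nothing beyond the words ``in particular.'' $\mathrm{Aut}(\mathbb{C}/E)$ preserves $V(\Psi_{\gbar})$ and permutes its irreducible components. $Y_{\gbar}$ is defined over $E$ exactly when it is fixed, rather than moved to another component. Neither the paper's proof nor yours shows this. Your appeal to the convention that $\tau_S$ names closed sets over $L$ does not help, since that changes nothing about where $Y_{\gbar}$ is defined.

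What the argument honestly gives is definability over a finite extension of $E$. Your last fallback is the right repair. Since $J(g_ix)$ depends only on the coset $\Gamma g_i$, each $Z_{\gbar}$ is a coordinate image of $Z_{\check\gbar}$. So a single finite extension $E'$ of $E$ works for all $\gbar$. Adjoining $E'$ to $E$, and hence to $L$, keeps everything finitely generated, which is all the later sections use. The unproved final clause is therefore a defect in the statement and its proof as written, not an idea you are missing relative to the paper.
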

\begin{proof}
Let $\gbar = \langle e,g_2, \ldots, g_n\rangle$ be given from $G$. Consider the group $\Gamma_{\gbar}=\Gamma\cap(g_2^{-1}\Gamma g_2)\cap\cdots\cap(g_n^{-1}\Gamma g_n)$. Then from our choice of $\gbar$, we have that $\Gamma_{\gbar}$ is a finite index subgroup of $\Gamma$. Hence $\Gamma_{\gbar}$ is a Fuchsian group of the first kind with the same set of cusps as $\Gamma$. We consider $\Gamma\setminus \mathbb{H}_{\Gamma}$ and $\Gamma_{\gbar}\setminus \mathbb{H}_{\Gamma}$, the corresponding compact Riemann surfaces, as well as the corresponding projective non-singular curves $S_{\Gamma}$ and $S_{\gbar}:=S_{\Gamma_{\gbar}}$. We have a holomorphic map $f:\Gamma_{\gbar}\setminus \mathbb{H}_{\Gamma}\rightarrow (\Gamma\setminus \mathbb{H}_{\Gamma})^n$ defined as $f([z]_{\gbar})=(\jbar(z),\jbar(g_2z),\ldots,\jbar(g_nz))$. To see that $f$ is well-defined, that is does not depend on the choice of representative, simply observe that the map $\jbar_{\gbar}:\mathbb{H}_{\Gamma}\rightarrow \mathbb{C}^n$ is $\Gamma_{\gbar}$-invariant (see Definition~\ref{zgbar}).

Using the equivalence of categories between compact Riemann surfaces and smooth projective curves \cite[Proposition 1.95]{GG12}, we have an algebraic morphism $F:S_{\gbar}\rightarrow S_{\Gamma}^n$ whose analytification is $f$. Since $S_{\gbar}$ is projective, we have that $Y_{\gbar}:=F(S_{\gbar})$ is closed \cite[\S 5.2, Theorem 2]{Sha13}, and since it is irreducible, we see that $Y_{\gbar}$ is a $1$-dimensional irreducible closed subset of $S_{\Gamma}^n$. Furthermore, $Z_{\gbar}=f(S_{\gbar}(\mathbb{C}))=F(S_{\gbar})(\mathbb{C})=Y(\mathbb{C})$ and hence $Y_{\gbar}\subseteq  V(\Psi_{\gbar})$. Let $X$ be an irreducible component of $V(\Psi_{\gbar})$ containing $Y_{\gbar}$. Then $Y_{\gbar}\subseteq X$ and both are irreducible closed subsets of $S_{\Gamma}^n$. By maximality of irreducible components, it must follow that $Y_{\gbar}=X$. Hence $Y_{\gbar}$ is an irreducible component of $V$ and $Z_{\gbar}=Y_{\gbar}(\mathbb C)$.
\end{proof}

Observe that Daw-Harris and Eterovi\'c prove the definability of $Z_{\gbar}$ in \cite[Lemma 2.6]{DawHarris} and \cite[Lemma 3.22]{Etrev} using the Mumford-Tate module and refer to \cite{Milneshimura}. The proof of the field definability of $Y_{\gbar}$ appeared difficult in
\cite{Etrev} because it was cast as a trivial corollary of the proof that $[\phi_\gbar]: \Gamma_{\gbar}\setminus \HH \mapsto Z_{\gbar} $ is a morphism of Shimura varieties and so a special variety. Only the definability of the image is needed for the verifying the modular axioms in Definition~\ref{mcax}.

\subsection{The Axioms}
We now give the axioms for the infinitary theory: $T^{\infty}_{SF}(\jbar)$.
For any vocabulary $\sigma$, we denote by $L_{\omega_1,\omega}(\sigma)$ the collection of formulas obtained by inductively closing the atomic formula obtained by substitution of constants or variables in the relations/functions of $\sigma$ by 
countable Boolean connectives and quantification over finite sequences of variables.

\begin{definition}[Axioms]\label{mcax} 
\begin{enumerate}
\item $T(\jbar)$ denotes the following set of first order axioms (they are all true in the prototypical model $\mathbb{M}_{\jbar}$):

\begin{enumerate}
\item Each sentence in $\th(\langle\HH, \{f_g:g \in G\rangle)$. These
include `Special Point axioms' $SP_g$: For each  $g\in \Gamma$ that
fixes a unique point in $D$.

$$\forall x, y \in D
[(g(x) =x \wedge  g(y) =y) \Rightarrow x=y] $$
\item  $\th(S(\CC),\mathcal{R})$ (where $\mathcal{R}$ is as in Definition~\ref{mcvocab}).
\item{Connection axioms}
\begin{enumerate}
\item Equivariance axioms
\begin{enumerate}
\item For $\gamma \in \Gamma$,
$x =\gamma y  \rightarrow J(x) =J(y)$.
\item For $\gamma \in \Gamma$ and $\check g_i$
such that $g= \gamma g_i$

 $x =g y  \rightarrow J(x) =J(y)$
\end{enumerate}
\item Modular axioms\footnote{Lemma~\ref{Zgthm} shows the modular axioms hold in $\MMM_{\jbar}$. And so they can consistently be added here. Moreover,
when $[G:\Gamma] < \omega$ the axiom is needed only for $\check \gbar$.}: Let $\gbar=\langle e,g_2, \ldots, g_k\rangle$ be from $G$ and $Y_{\gbar}$ as given in Lemma~\ref{Zgthm}.
    \begin{enumerate}
\item $Mod^1_{\gbar}: \forall x \in D \ \{J(x),J(g_2 x), \ldots J(g_k x)\} \in Y_{\gbar}$.
\item  $Mod^2_{\gbar}: \forall z \in Y_{\gbar}\subseteq S^{k}  \ \exists x  \in D
    \left(J(x),J(g_2 x), \ldots J(g_k x)) =z \right) $ 
\item $${\bf MOD}  = \{ Mod^1_{\gbar}\wedge Mod^2_{\gbar}:
    \gbar\in G^{m}, m<\omega\}$$

\end{enumerate}
\end{enumerate}
\end{enumerate}
\item We add the infinitary axioms
\begin{enumerate}
\item $\Phi_\infty$ is the $L_{\omega_1,\omega}$ sentence asserting that  for $(D, S, J)$ both the transcendence degree of $S(F)$ over $\QQ$ 
and the strongly minimal  structure $\langle D, \{f_g: g \in \Gamma \}\rangle$ are infinite.

\item $SF$ (standard fibers) denotes the $L_{\omega_1,\omega}$-axiom:
$$(\forall x \forall y \in D(J(x)= J(y) \rightarrow \bigvee_{g \in \Gamma}
x =f_g(y)).$$

\item We extend  $T(\jbar)$ in two ways.
\begin{enumerate}
\item $T^\infty (\jbar)$ denotes $T(\jbar) \cup \{\Phi_{\infty} \}$ and

\item $T^\infty_{SF}(\jbar)$ denotes $T(\jbar) \cup \{SF\}\cup \{\Phi_{\infty} \}$.

\end{enumerate}
\end{enumerate}
\end{enumerate}
\end{definition}

When $[G:\Gamma]<\omega$, the standard fiber axiom can be rephrased as:
for each $e \in D$, there exists $i<k$ and $\gamma_e \in \Gamma$ such that
  $e = \check  g_i \gamma_e  d$ for some $\gamma_e \in \Gamma$.  This formulation
  clarifies some later arguments. 

\begin{remark}\label{ntypes}Observe \begin{enumerate}
  \item By Lemma~\ref{Zgthm}, in the theory $T(\jbar)$, we have that Axiom 1(c)ii. is true in $\MM_j$ and so consistent with the others. Thus, we can and will identify $Z_{\gbar}$ with $Y_{\gbar}$.

\item {\rm Since $Gd$ is the definable closure of any single element, the 
one type of any element controls its orbit. In particular there are only 
countably many $n$-types within the orbit for any $n$.}
\end{enumerate}
\end{remark}

\section{$L_{\infty,\omega}$-completeness}\label{infcompsec}

In this section we show that the infinitary
theory $T^\infty_{SF}(\jbar)$ is complete for $L_{\omega_1,\omega}$  (Definition~\ref{infcomdef})
and the induced complete first order theory $T(\jbar)$ is
$\omega$-stable with elimination of quantifiers.

\begin{definition}\label{infcomdef}
An $L_{\omega_1, \omega}$-sentence $\phi$ is {\em complete} if for every $L_{\omega_1, \omega}$-sentence $\psi$,
\medskip
$\phi\Rightarrow \psi$ or $\phi\Rightarrow \neg\psi$

\end{definition}

\begin{fact}[Karp's theorem]
$\phi$ is $L_{\omega_1,\omega}$-complete iff all 
models of $\phi$ are $(\infty,\omega)$-equivalent; 

i.e. back and
forth  equivalent by the quantifier free back-and-forth.
\end{fact}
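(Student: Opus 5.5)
Reading the statement as Karp's theorem itself, there is nothing to prove from the paper's own machinery: it is classical, and one would cite Karp. Here is how I would prove it in outline.

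The plan has two directions. For each, a \emph{back-and-forth system} between $M$ and $N$ means a nonempty family $I$ of finite partial isomorphisms (quantifier-free-type-preserving maps between finite tuples) with the back and forth extension properties.

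\textbf{From a back-and-forth system to $L_{\infty,\omega}$-equivalence.} I would first show, by induction on $L_{\infty,\omega}$-formulas $\varphi(\bar x)$, that for every $\bar a\mapsto \bar b$ in $I$ we have $M\models\varphi(\bar a)$ iff $N\models\varphi(\bar b)$.

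\begin{enumerate}
\item The atomic case holds because the maps in $I$ are partial isomorphisms.
\item Negations and arbitrary conjunctions or disjunctions are immediate from the induction hypothesis.
\item For $\exists \bar y\,\psi$, I use the forth property to extend $\bar a\mapsto\bar b$ so that it covers a witness in $M$, and the back property for the converse direction.
\end{enumerate}

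Applying this to the empty map, which lies in $I$, shows $M\equiv_{\infty,\omega}N$.

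\textbf{From completeness to a back-and-forth system.} Now suppose $\phi$ is $L_{\omega_1,\omega}$-complete and $M,N\models\phi$. I would first reduce to countable models. Since $\phi$ is complete, it implies the Scott sentence $\sigma_M$ of any countable model $M$ of $\phi$. By Scott's theorem, every countable model of $\phi$ is then isomorphic to $M$, so all countable models of $\phi$ are isomorphic.

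For arbitrary models, I would take $I$ to be the set of finite maps $\bar a\mapsto\bar b$ such that $(M,\bar a)$ and $(N,\bar b)$ satisfy the same $L_{\omega_1,\omega}$-formulas. Checking the extension properties needs care, because the witnessing formulas may form uncountable conjunctions. The standard fix is to use a countable fragment containing $\phi$ together with downward Löwenheim--Skolem arguments, or alternatively to argue via the forcing-absoluteness formulation: collapse $|M|+|N|$ to be countable, so that $M$ and $N$ become countable models of $\phi$ in the extension and hence isomorphic there, and $\equiv_{\infty,\omega}$ is absolute between the two universes.

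The main obstacle is this last step, the passage from $L_{\omega_1,\omega}$-completeness to $(\infty,\omega)$-equivalence of uncountable models. There I would invoke the forcing/absoluteness argument, since $\equiv_{\infty,\omega}$ is equivalent to isomorphism in every extension in which both models are countable.

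In the paper itself, only the easy direction is actually needed. Theorem~\ref{infcomp} will exhibit a back-and-forth system between any two models of $T^\infty_{SF}(\jbar)$, and the first direction above then gives $L_{\omega_1,\omega}$-completeness.
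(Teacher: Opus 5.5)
\textbf{The easy direction is fine; the converse is not actually proved.} The paper gives no proof of this Fact; it simply cites Karp. Your first direction (a back-and-forth system gives $L_{\infty,\omega}$-equivalence, hence completeness) is the standard induction and is correct. It is also the only direction the paper uses.

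The converse, however, is asserted rather than proved, and both of your sketches have problems:
\begin{itemize}
\item The ``countable fragment plus downward L\"owenheim--Skolem'' fix is not an argument. For arbitrary uncountable structures, agreement on $L_{\omega_1,\omega}$-formulas does not yield a back-and-forth system. For example, the well-orders $\omega_1$ and $\omega_1\cdot 2$ are $L_{\omega_1,\omega}$-equivalent but not $(\infty,\omega)$-equivalent. So any proof must exploit that $\phi$ is a single complete sentence with a countable model.
\item In the forcing sketch, ``$M$ and $N$ become countable models of $\phi$ in the extension and hence isomorphic there'' silently assumes that $\phi$ still has only one countable model after the collapse. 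Completeness of $\phi$ in the ground model speaks only about ground-model sentences, so this needs an argument. One option: $M,N\models\sigma_{M_0}$ in the ground model, satisfaction is absolute, and then apply Scott's isomorphism theorem in the extension.
\item The absoluteness of $\equiv_{\infty,\omega}$ that you invoke is itself a theorem requiring proof.
\end{itemize}

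\textbf{The missing idea: the Scott sentence works in every cardinality, so no forcing is needed.} The Scott sentence of a countable structure determines it up to $(\infty,\omega)$-equivalence among structures of every cardinality, not just countable ones.
\begin{enumerate}
\item If $\phi$ has a model, downward L\"owenheim--Skolem gives a countable model $M_0$. This uses a countable fragment containing $\phi$ and a countable vocabulary, which $\tau$ is.
\item Completeness gives $\phi\models\sigma_{M_0}$.
\item Let $\alpha$ be the Scott rank of $M_0$ and let $N\models\sigma_{M_0}$ be arbitrary. The maps $\bar a\mapsto\bar b$ with $N\models\varphi^{\alpha}_{\bar a}(\bar b)$ form a back-and-forth system between $M_0$ and $N$:
\begin{itemize}
\item the conjuncts $\forall\bar x\,(\varphi^{\alpha}_{\bar a}(\bar x)\to\varphi^{\alpha+1}_{\bar a}(\bar x))$ of $\sigma_{M_0}$ supply the forth clause $\bigwedge_{c}\exists y\,\varphi^{\alpha}_{\bar a c}$;
\item the same conjuncts supply the back clause $\forall y\bigvee_{c}\varphi^{\alpha}_{\bar a c}$;
\item the conjunct $\varphi^{\alpha}_{\emptyset}$ puts the empty map in the system.
\end{itemize}
\item Composing the systems for $M_0,M$ and $M_0,N$ gives a back-and-forth system between any two models $M,N$ of $\phi$.
\end{enumerate}

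You started down this road when you noted that $\phi$ implies the Scott sentence of a countable model. But you used Scott's theorem only to compare countable models, and that is what forced the forcing detour.
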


One of the main goal of this section is to prove that any two models of $T^{\infty}_{SF}(\jbar)$ are $(\infty,\omega)$-equivalent.

\subsection{Reduction to the field type and consequences}\label{reduce}

Much of the argument here moves between $\tau$-structures
and field structures that are behind the scene but explicit.
Hence, we offer

\begin{warning}\label{warning} For a substructure $U$ of a model $M$ of  $T(\jbar)$ we think of
$S =U_S$ as a curve over a 
subfield $F_U$\footnote{$F_U$ extends the field $L$, which was defined in Notation~\ref{fixfield} by adding names for elements of $\text{Im}(\Sigma)$ to the field of definition $E$ of $S$.}.  A priori, a variety $V \subset S^n$ is defined by parameters $\abar$, which need not be in $S(F_U)$.  However, the stability of $ACF$ and the basic fact
for stable theories
that for any $A \subseteq M \models T$  defined by $\phi(\xbar,\bbar)$ in $M$ with $\bbar \in M$, there exists
 $\phi'(\xbar,\bbar')$    with $\bbar' \in A$ such that $\phi(\xbar,\bbar) \leftrightarrow \phi'(\xbar,\bbar') $  on $A$. We can apply this fact since $\phi'(\xbar,\ubar)$ is one of the relations in $\mathcal{R}$ over $L$. Thus, we write $S(F_U)$) without explicitly referring to this transformation (See Footnote~\ref{fnped} and Theorem~\ref{infcomp}.).
\end{warning}

We work with the following assumption in this section. We will conclude that the $\tau$-type over $A$ of any $d\in D_M-D_A$ is determined (Definition~\ref{defdet}) by $\qftp_{\tau_S}(J(\check\gbar(d))/A)$. 

 \begin{assumption}\label{context} {\rm Assume $k=[G:\Gamma]<\omega$. Abbreviate $\langle x, \check g_1 x, \ldots \check g_k x\rangle$ as $\check \gbar(x)$.
 Let  $M =\langle D,J ,S \rangle$ be a model of $T(\jbar)$.  Suppose $A$ is substructure of $M$ with sorts $D_A:= A\cap D$ and $S_A:=A\cap S$ such that $J$ maps $D_A$ onto $S_A$.
Moreover, $F_A$ is finitely generated over the field $L$, which was defined in Notation~\ref{fixfield}.}
\end{assumption}

The key use of the non-arithmeticity of $\Gamma$ is
 \begin{remark}\label{key} Since $J$ is constant on each coset of $\Gamma$ and $[G:\Gamma] = k$, for any $d \in D$ and any $g \in G$, we have that $J(gd)$ is in the finite set enumerated by $J(\check \gbar(d))$.
\end{remark}

Note that for any $\gbar$ from $G$, we have $ \gbar(d) \in \dcl(d) = Gd \cup \{J(\check\gbar(d))\}$ and so the quantifier-free $\tau$-type of $\gbar(d)$ over a set $X$ 
is implied by $\qftp_\tau (d/X)$.
Crucially, we will now show that
a stronger converse,  Lemma~\ref{3.3rev3}, holds. This requires a little more machinery.

\begin{notation}\label{reduct}{\rm Using the notation of Assumption~\ref{context}, we  compare here types in the vocabularies $\tau$
and $\tau_S$. In the proof of the next lemma we will look at $\tau$-formulas of the following special form: let $\sigma(v_1, \ldots, v_n)$ be a $\tau_S$-formula with parameters in $A$ and for any tuple $\gbar=(g_1,\ldots,g_n)$ from $G$, let $\sigma_{\gbar}(x)$ be the $\tau$-formula $\sigma(J(g_1 (x)), \ldots J(g_n (x)))$. Then clearly for any $d\in D_M - D_A$, 
\begin{equation} M\models \sigma_{\gbar}(d) \leftrightarrow  M\models \sigma (J(\gbar (d))). \label{eq1}\end{equation} Moreover and as noted above, since for any $g \in G$, the point $J(gd)$ is in the set enumerated by $J(\check \gbar(d))$, there is a $\tau_S$-formula $\theta(v_1,\ldots,v_k)$ so that the $\tau$-formulas $\theta_{\check\gbar}(x)$ and $\sigma_{\gbar}(x)$ coincide. For simplicity we will write $\check\theta$ instead of $\theta_{\check\gbar}$.

The following example may help. Suppose $A \subseteq M$ 
as in Definition~\ref{defdet} and $d\in D_M - D_A$.  Since $A$ is closed under $q$,  $J(\check \gbar (d) ) \cap A =\emptyset$. So there is no true instance of a  formula
$J(\check g_i d) = y$ in $\qftp_{\tau_S}(d/A)$ or $r=\qftp_{\tau_S}(J(\check \gbar d)/A)$. The formulas in $r$ are field formulas with free variables among $v_1, \ldots v_k$, where $k = [G:\Gamma]$.  The formula $\Phi_{\check g_j}(J(x), J(\check g_j x))=0$ (Fact~\ref{FactCorrespondence}) is 
in $\qftp_{\tau}(d/A)$ .
Whence, the formula $\Phi_{\check g_j}(v_1,v_j)=0$
 is in the $\qftp_{\tau_S}(\check \gbar (d)/A))= r$ and vice versa.
}
\end{notation}

\begin{definition}\label{defdet} Let $\tau_S \subset \tau $ be the vocabularies of $S(F)$ and $(D,J,S)$ 
respectively and $A   \subseteq M \models T(\jbar)$.
For $d\in D_M - D_A$ and $\bbar \in S_M - S_A$ we say
 $r=\qftp_{\tau_S} (J(\check \gbar(d) )\bbar)/A)$ {\em determines}
$\qftp_\tau(d\bbar/A)$ when for any $e\in D_M - D_A$,
if $J(\check \gbar e)$ realizes $r$, then $\qftp_\tau(d\bbar/A) = \qftp_\tau(e\bbar/A)$.
 \end{definition}

 Since $\Gamma$ is not arithmetic $\qftp_\tau(d\bbar/A)$ are $k$-types, because $J(Gd)$ is finite. In the 
 arithmetic case one needs an infinite sequence $g_id$, $g_i \in G$. A key hypothesis  (Assumption~\ref{context}) of the next lemma is that $J$ maps $D_A$ onto $S_A$. In Theorem~\ref{infcomp}, this will be easily
obtained while meeting requirement that the structure is finitely generated. {In the proof of Theorem~\ref{goal}, it will follow
from the hypothesis that $A = M$ is closed in the geometry.}

When we apply set theoretic operations to a sequence $\bbar$ we actually
mean to the set enumerated by $\bbar$.

\begin{lemma}\label{3.3rev3} 
Using the notation of Assumption~\ref{context}, for any $\bbar$ with 
$\bbar \cap J(\check \gbar(d)) = \emptyset$
and $\bbar\cap Gd = \emptyset$, we have that
$r=\qftp_{\tau_S} (\check \gbar(d)  \bbar/A)$ {\em determines}
$\qftp_\tau( d\bbar/A )$.
\end{lemma}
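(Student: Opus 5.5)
The plan is to show that, for $d,e\in D_M-D_A$ with $J(\check\gbar(e))\bbar$ realizing $r$, the map fixing $A$ and sending $d\mapsto e$ and $\bbar\mapsto\bbar$ extends to an isomorphism of the $\tau$-substructures generated by $A\cup\{d\}\cup\bbar$ and $A\cup\{e\}\cup\bbar$. Equality of quantifier-free $\tau$-types then follows immediately.

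First I describe the generated substructure. By Remark~\ref{key} and the note following it, the $\tau$-substructure generated by $A d\bbar$ has domain sort $D_A\cup Gd$. Its target sort is $S_A\cup J(\check\gbar(d))\cup\bbar$, closed under the $\tau_S$-function symbols; since $\tau_S$ is relational plus constants from $S(L)\subseteq S_A$, no further closure is needed. Every atomic $\tau$-formula with parameters from $A$ in the variables $x,\ybar$ is then equivalent in $M$ to a Boolean combination of three kinds of formulas.
\begin{enumerate}
\item Domain equalities $g x = h x$ and $gx = a$ with $a\in D_A$. The second kind is always false, since $D_A$ is closed under $G$ and $d\notin D_A$.
\item Formulas $\sigma_{\gbar}(x,\ybar)$ with $\sigma$ a $\tau_S$-formula over $A$. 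By (\ref{eq1}) and Notation~\ref{reduct}, each of these is equivalent to $\check\theta(x,\ybar)$, which is a formula about $J(\check\gbar(x))\ybar$ over $A$.
\item Equalities $J(gx)=y_i$ or $J(gx)=s$ with $s\in S_A$. The second is false: because $J$ maps $D_A$ onto $S_A$, if $J(gd)=J(a)$ then SF gives $gd\in\Gamma a\subseteq D_A$. (I am assuming SF is available, or else that its needed instance follows from closure under $q$ as in the example in Notation~\ref{reduct}.) The first is false by the hypothesis $\bbar\cap J(\check\gbar(d))=\emptyset$.
\end{enumerate}
Formulas of kinds (2) and (3) are settled by $r$. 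Hence it remains to handle the pure domain equalities $gx=hx$, i.e.\ whether $h^{-1}g$ fixes $x$.

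This is the main obstacle: the "stabilizer type" of $d$ in $G$. If $h^{-1}g$ fixes $d$, then $d$ is special, so $d\in\Sigma$, since special points are named by constants (Notation~\ref{mcvocab}). More precisely, in an arbitrary model of $T(\jbar)$ the fixed-point sets of elements of $G$ are controlled by the $SP_g$ axioms and $\th(\langle\HH,f_g\rangle)$ together with these constants, so such points lie in $D_A$, which contradicts $d\notin D_A$. Thus for $d,e\notin D_A$ all the equalities $gx=hx$ with $g\neq h$ fail for both. I expect this reduction, namely checking that every fixed point of a nontrivial $g\in G$ is a named constant in every model (via a first-order sentence of the form $\forall x\,(gx=x\to \bigvee x=c_\sigma)$ true in the standard model), to be the delicate step. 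If $A$ is required to contain the constants, the step is immediate.

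Combining the three kinds, every atomic formula true of $d\bbar$ over $A$ is decided by $r$. Therefore $\qftp_\tau(d\bbar/A)=\qftp_\tau(e\bbar/A)$ whenever $J(\check\gbar(e))\bbar\models r$. The hypothesis $\bbar\cap Gd=\emptyset$ is used only to make sure the sorts are not confused when writing the formulas.
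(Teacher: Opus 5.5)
Your proposal is correct and is essentially the paper's proof. Like the paper, you sort the atomic $\tau$-formulas into three kinds: domain equalities, which fail off $D_A$ because the fixed points of elements of $G$ are the named special points; equalities of $J$-values with elements of $S_A$ or of $\bbar$, which fail by closure of $A$ and the disjointness hypothesis; and formulas $\check\theta$, which reduce to $r$ via (\ref{eq1}). One simplification: your appeal to SF for $J(gx)=s$ with $s\in S_A$ (a point the paper glosses as ``$A$ is closed under $J$'') is unnecessary, since $v_j=s$ is itself a $\tau_S$-formula over $A$ and is therefore decided by $r$ directly.
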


\begin{proof}
We   work  here with a singleton $d$; we extend to finite sequences $\dbar$ in remark~\ref{classify} and in the proof of Theorem~\ref{goal}. So, let  $d\in D_M-D_A$.
We first assume $\bbar$ is empty. Suppose  $e \in D_M$ and that
 $J(\check \gbar d)$ and $J(\check   \gbar e)$ realize the same quantifier free $\tau_S$-type $r$ over  $A$.

Note that the atomic formulas in one variable that might appear in $\qftp_{\tau}(d/A)$
and do not involve field formulas are of two kinds. i)  $J(x) = a$ for some $a\in A$; all such are false 
because $A$ is closed under $J$.  ii) $g(x) =x$; all are false because all $d \in D_M -D(L)$ are 
Hodge generic. {This says also that no such atomic formula can differentiate elements of $D_M-D_A$. Indeed they will all satisfy the negations of these formulas.}

As explained in Notation~\ref{reduct}, it remains to consider formulas of the form $\check \theta(x)$ in $r_1 =\qftp_\tau(d/A)$ and $r_2=\qftp_\tau(e/A)$.  If $\check \theta(x)\in r_1 $, using the equivalence~(\ref{eq1}) we get that $\theta(\vbar)\in r=\qftp_{\tau_S}(J(\check \gbar d)/A )$. Our assumption that $J(\check   \gbar e)$ also realizes $r$ gives, using the equivalence~(\ref{eq1}) once more, that $\check \theta(x)\in r_2$. Reversing the roles of $J(\check \gbar d)$ and $J(\check   \gbar e)$ gives that $r_1=r_2$

Now assume that $\bbar \neq \emptyset$. Recall that we assume $J(\check \gbar (d)) \cap \bbar = \emptyset$. So there are no positive formulas from $\tau -\tau_S$
in $\qftp_\tau(J(\check \gbar d)\bbar/A ) - \qftp_{\tau_S}(J(\check \gbar d)\bbar/A )$. Since $\qtp_{\tau_S} (\check \gbar(d) )\bbar/A)$ implies $\qtp_{\tau_S} (\check \gbar(d) )/A)$, which implies $\qtp_{\tau_S} (\check \gbar(d) )/A)$, we finish.
\end{proof}

\subsection{The back and forth}

In the proof here
and in \cite{DawHarris, Etrev} we actually work in an extension given by taking 
the substructure $U,U'$ to be given by  finitely generated (over $L$)   fields $L_U$, $L_{U'}$. While the finitely generated steps in the argument
specialize to $S(F)$ only via  Warning~\ref{warning}, the final back and forth is immediate: A back and forth between two structures immediately determines one on any relativized reduct.

\begin{definition}[The back and forth scheme]\label{ps} {\rm
Fix two models $M =(D,J,S(F))$ and $M' =(D',J',S(F'))$
 of $T(\jbar)$. Note here, that $F:=F_U$ and $F':F_{U'}$. Consider the set
of finitely generated substructures (some of the generators may be algebraically independent) of $M$ and $M'$ and the collection
$I$ of partial isomorphisms such that:
For each $f \in I$, $\dom f$ and ${\rg \  f}$ are each finitely generated
over $L$ as in Assumption~\ref{context}.
 A typical member $f$ of the system  $I$ has
 $\dom f = U = D_U \cup
S_U$. Since $U$ is finitely generated, $D_U$ consists of the $G$-orbits of a
finite number of $x\in D$; $S_U$ is $S(L_U)$
 where $L_U$ 
is the field generated\footnote{Note that the additional points obtained by including the orbits
determine only finitely many new field elements since $J$ is constant on each
orbit, so the field remains finitely
 generated.} by $L$ (since the elements of
$L$   are named), but now extended by  the coordinates of the $J(x)$ for $x \in
D_U$. Define a
similar subsystem for $M'$,  labeling by putting  primes on corresponding
objects.
}
 \end{definition}
Our task is to prove this scheme has the back-and-forth property. Note that by definition, every point of $D$ is either Hodge generic or special (and thus
named in the vocabulary). So we can and will ignore the special points in building the back and forth system.

\begin{notation}\label{ftype}  For a type $r(v)$ over a set $A$ and an isomorphism $f$ from $A$ to $B$, $f(r)$
is the set of $B$-formulas $\phi(v,f(\abar))$ with $\phi(v,\abar)\in r$.
\end{notation}

Since our main goal concerns uncountable models we must deal with models of infinite transcendence degree, which is preserved by  $L_{\infty,\omega}$ equivalence. Thus,
we restrict to that case.

\begin{theorem} \label{infcomp} Suppose  that $M$ and $M'$ satisfy $T^{\infty}_{SF}(\jbar)$. Then, the
    $qf$-system  defined above  is
a quantifier-free {\em back and forth} system between them.  
There is then a unique
countable model $M$  of $T^{\infty}_{SF}(p)$, and hence $T^{\infty}_{SF} (p)$ is axiomatized by a complete sentence of $L_{\omega_1,\omega}$, the Scott sentence\footnote{Any countable structure is characterized by a sentence of
$L_{\omega_1,\omega}$, its Scott sentence.} of $M$. Furthermore, the first order theory $T(\jbar)$ is complete and
admits elimination of quantifiers. 
 \end{theorem}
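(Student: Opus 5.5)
The plan is to verify the back-and-forth property for the system $I$ of Definition~\ref{ps} directly. The extension step is the only substantive part; Karp's theorem and standard facts then give the remaining claims. $I$ is nonempty: the substructures generated by the named constants (the special points, their images, and $S(L)$) are isomorphic via the identity on names. Given $f\colon U\to U'$ in $I$ and a new element of $M$, we extend $f$ to a member of $I$ containing that element. By symmetry this gives the ``back'' direction too.

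\emph{Case 1: a new field point $b\in S_M-S_U$.} By $Mod^2$ for the trivial sequence, $J$ is onto, so we pick $d\in D_M$ with $J(d)=b$. Adjoining $b$ alone would break the hypothesis of Assumption~\ref{context} that $J$ maps $D_U$ onto $S_U$. So we always adjoin a whole orbit $Gd$, which contributes the finite tuple $J(\check\gbar d)$ to the field. This reduces Case 1 to Case 2.

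\emph{Case 2: a new $d\in D_M-D_U$.} Special points are named, so $d$ is Hodge generic. Let $r=\qftp_{\tau_S}(J(\check\gbar d)/U)$. By Remark~\ref{key} and Lemma~\ref{3.3rev3}, $r$ determines $\qftp_\tau(d/U)$. Moreover, the structure generated by $U\cup\{d\}$ is $U$ together with $Gd$ and the field generated by the coordinates of $J(\check\gbar d)$, so it remains finitely generated.

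Realizing $f(r)$ on the other side is the main obstacle. The type $r$ is a type over a finitely generated field, and the tuple $J(\check\gbar d)$ lies on $Y_{\check\gbar}$ by $Mod^1$. By quantifier elimination for ACF, $r$ is determined by the locus $W$ of $J(\check\gbar d)$ over $F_U$, an irreducible subvariety of $Y_{\check\gbar}$. Two subcases arise.

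If $W$ is a point, we use Warning~\ref{warning} to realize $f(W)$ in $S(F')$ via an algebraically closed field. If $W$ has dimension one (so $W=Y_{\check\gbar}$'s relevant component over $F_U$), $\Phi_\infty$ gives infinite transcendence degree of $F'$, so $F'$ contains a generic point of $f(W)$.

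In both subcases we obtain $\bbar'\in S_{M'}^k$ realizing $f(r)$ and lying in $Y_{\check\gbar}$. By $Mod^2$ in $M'$ there is $e\in D_{M'}$ with $J(\check\gbar e)=\bbar'$. Since $\bbar'\notin S_{U'}$ and $J$ maps $D_{U'}$ onto $S_{U'}$, we get $e\notin D_{U'}$, and $e$ is Hodge generic.

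To extend $f$ by $d\mapsto e$, $gd\mapsto ge$, we must check this is well defined and injective on orbits. Here SF and the equivariance axioms are used: they guarantee that the $G$-orbit of $d$ (respectively $e$) is a free $G$-set, by Hodge genericity, whose $J$-images are exactly $J(\check\gbar d)$ with fibers the $\Gamma$-cosets. The correspondence is therefore an isomorphism of the generated substructures. Lemma~\ref{3.3rev3}, transported through $f$, gives that all quantifier-free formulas match.

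A subtle point is that the image tuple must satisfy exactly the same coincidences among $J(\check g_i d)$ as the original. These coincidences are encoded in $r$ as equalities $v_i=v_j$, so they transfer.

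With the back-and-forth established, the remaining claims follow. By Karp's theorem, any two models of $T^\infty_{SF}(\jbar)$ are $(\infty,\omega)$-equivalent. Two countable ones are then isomorphic, by a back-and-forth along enumerations. Thus there is a unique countable model, and its Scott sentence axiomatizes the class completely.

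For the first-order part, models of $T(\jbar)$ need not satisfy SF. We instead take $\omega$-saturated models of $T(\jbar)$, or argue that the same system works without SF, since each step above used only the first-order axioms $Mod^1$, $Mod^2$, the equivariance axioms and saturation in place of $\Phi_\infty$. A quantifier-free back-and-forth between any two $\omega$-saturated models of $T(\jbar)$ yields both completeness and quantifier elimination by the standard criterion.
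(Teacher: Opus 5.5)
Your proposal follows essentially the paper's proof. It uses the same finitely generated back-and-forth system and reduces a new field point to a new orbit. It realizes $f(r)$ on $Y_{\check\gbar}$ via $\Phi_\infty$ (or algebraic closedness) and $Mod^2$, transfers the quantifier-free type by Lemma~\ref{3.3rev3}, and then applies Karp's theorem and the $\omega$-saturated-models criterion for completeness and quantifier elimination. One correction: your Case~2 does use SF (to get $J(d)\notin S_U$, hence $e\notin D_{U'}$), so in the first-order step it is $\omega$-saturation, together with the infinitude of the fibers of $x\mapsto J(\check\gbar(x))$, that lets you choose $e$ outside the finitely many orbits of $D_{U'}$, with any equalities $J(\check g_i d)=a\in S_U$ recorded in $r$.
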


\begin{proof}  Our back and forth scheme has an isomorphism $f$  between $U \subseteq M$
and $U' \subseteq M'$. Note $f$ restricts to a $G$-equivariant
injection of $D_U$ into $D'_U$ and a
bijection between $S(L_U) \subseteq S(F)$ and $S(L_{U'}) \subseteq S(F') $
that fixes $L=E(\Sigma)$. Here we write $F, F'$ for $F_U,F_{U'}$ respectively.

For $x \in M - U$, we must find $x' \in M'$ so that $f \cup \{\langle x,x'\rangle\}$ generates
an isomorphism between the structures generated by $U \cup \{x\}$ and $U'
\cup \{x'\}$.
If $x\in S(F)$, $x = J(d)$ for some $ d\in D = D_F$, finding $d'$ which generates an isomorphism between the structures generated by $U \cup \{d\}$ and $U'
\cup \{d'\}$ containing $x'=J(d')$ suffices. So we reduce to
the  case $x \in D-D_U$. Note, however, that since $J$ maps onto $S(F)$ we eventually
have considered every finite sequence from $S(F)$.

So let $x \in D- D_U$
(and so 
Hodge generic). As in Lemma~\ref{3.3rev3}, let $r = \qftp_{\tau_S}(J(\check \gbar(x))/U)$.
 Since the formulas in $r$ are over $L_U$ and $f$ is an isomorphism between $L_U$
 and
 $L_{U'}$,  $f(r)$ is a consistent type over the finite set of generators of  $L_{U'}$.
Let $W_{r}$ be the minimal subvariety of $S(F)^n$ containing $J'(\check \gbar(x))$ and defined over $L_U$.
By  $Mod^1_{\gbar}$ and minimality, since $J(\check \gbar(x))$ is contained in $Z_{\check \gbar}(F)$, we have that $W_{r}$ is  contained in
$Z_{\check \gbar}(F)$. We need to find $x' \in D'$ such that $J(\check \gbar(x'))$ satisfying $f(r)$.
 Since
 $Z_{\check \gbar}(F)$  is defined over $L$,
 $f (Z_{\check \gbar}(F))=Z_{\check \gbar}(F')$ and 
  $$f(W_{U,d}) \subseteq f (Z_{\check \gbar}(F))=Z_{\check \gbar}(F').$$

Now over $L_{U'}$, the type $f(r)$ determines a minimal variety $W'_{f(r)}$ which satisfies $W'_{f(r)} \subseteq  f(W_{U,d})$. Given our assumption on the transcendence degree of $F'$ (recall $F' \models T^{\infty}_{SF}(\jbar)$), we can find an $L_{U'}$-generic point $h$ of $W'_{f(r)}$ in $Z_{\check \gbar}(F')$. Now by $Mod^2_{\gbar}$
there is an $x'$ in $D'_U$ with $J'(\gbar(x')) = h$. 
Now using Lemma~\ref{3.3rev3} we have that $r=\qftp_{\tau_S} (\check \gbar(x)  / L_U)$  determines
$\qftp_\tau( x  / L_U )$ and similarly $f(r)$ determines $\qftp_\tau( x'  / L_{U'} )$. Using this we get that $f \cup \{\langle x,x'\rangle\}$ generates
an isomorphism between the structures generated by $U \cup \{x\}$ and $U'
\cup \{x'\}$ which is what we aimed to prove. 

Thus all models of $T^{\infty}_{SF} (\jbar)$ are $({\infty,\omega})$-equivalent.
In particular, the standard model, $M_{\overline{\QQ^*}}$ over the countable $\omega$-saturated ACF, $\QQ^*$, is the unique countable model of the 
(necessarily) complete for $L_{\omega_1,\omega}$-theory $T^{\infty}_{SF}(\jbar)$ \cite[\S 6]{Baldwincatmon}.

For the furthermore, 
by compactness every model $M$  of $T (\jbar)^{\infty}_{SF}$ has an $\omega$-saturated 
first order elementary extension $M_1$ satisfying  $T^{\infty}(\jbar)$.
This model will fail SF since for any $d\in D$,   
$p^{-1}(p(d))$ will contain infinitely many elements that are not in the $\Gamma$-orbit
of $d$. But, the restriction $M_2$ of each $p^{-1}(p(d))$ to the $\Gamma$-orbit of $d$ is an elementary extension of $M$  
satisfying $T^{\infty}_{SF}(\jbar)$. And the back and forth argument works as well for the class of $\omega$-saturated models.
 Thus,  $T(\jbar)$ admits elimination of quantifiers and is complete \cite[Proposition 29]{Pilnotes}. 
 \end{proof}

 By the completeness of $T(\jbar)$, we can identify it with $\th(\jbar)$.

\begin{corollary}\label{wstab} The first order theory $T(\jbar)$ is $\omega$-stable.
\end{corollary}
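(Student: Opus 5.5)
We will prove $\omega$-stability by counting types: since $T(\jbar)$ is complete, countable, and admits quantifier elimination (Theorem~\ref{infcomp}), it suffices to show that over any countable subset $A$ of a model $M\models T(\jbar)$ there are only countably many complete $1$-types. By quantifier elimination, a type over $A$ is determined by its quantifier-free part. We first enlarge $A$ to a countable substructure $A^*$ closed under the $G$-action and under $J$, with $J$ mapping $D_{A^*}$ onto $S_{A^*}$. This is done by adding, for each point of $S_A$, a preimage in $D$ (using surjectivity of $J$) together with its $G$-orbit and the images under $J$, and iterating $\omega$ times. Counting types over $A^*$ bounds the count over $A$, so we may assume $A$ satisfies the closure conditions of Assumption~\ref{context}. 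The finite generation clause there plays no role in Lemma~\ref{3.3rev3}'s argument, which used only closure under $J$ and $G$.

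We then count types of a single element $x$, splitting into cases.

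\textbf{Case $x\in S$.} The quantifier-free $\tau$-type of $x$ over $A$ is just its $\tau_S$-type over $A$, since there are no atomic formulas involving $J$ applied to an element of the target sort. By Warning~\ref{warning}, this amounts to a field type of a point of the curve $S$ over the countable field $F_A$. Since $ACF$ is $\omega$-stable, there are only countably many such types.

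\textbf{Case $x\in D_A$ or $x$ special.} Here the type is realized in $A$ or named by a constant, so there are countably many such types.

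\textbf{Case $x\in D-D_A$.} Then $x$ is Hodge generic. By Lemma~\ref{3.3rev3} (with $\bbar$ empty), $\qftp_\tau(x/A)$ is determined by $r=\qftp_{\tau_S}(J(\check\gbar(x))/A)$. This is a type of a $k$-tuple of points of $S$, with $k=[G:\Gamma]<\omega$, over the countable field $F_A$. Here the non-arithmeticity of $\Gamma$ is essential: because $k$ is finite, $r$ is a type in finitely many variables rather than an infinite sequence. By $\omega$-stability of $ACF$ there are only countably many choices of $r$, hence countably many types in this case.

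Summing the three cases gives $|S_1(A)|\le\aleph_0$, so $T(\jbar)$ is $\omega$-stable.

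The main obstacle is the reduction step. We must check that the closure of an arbitrary countable $A$ stays countable, and that Lemma~\ref{3.3rev3} applies over such an infinitely generated $A$. We expect this to work because its proof only inspects atomic formulas of the forms $J(x)=a$ and $g(x)=x$, together with field formulas in $J(\check\gbar x)$. If needed, we would instead pass to a countable elementary submodel containing $A$, which is automatically closed under $G$ and $J$.
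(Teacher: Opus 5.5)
Your proposal is correct and is essentially the paper's argument: Lemma~\ref{3.3rev3} reduces the quantifier-free $\tau$-type of a new $d\in D$ to the $\tau_S$-type of the finite tuple $J(\check\gbar(d))$, and $\omega$-stability of the curve sort then bounds the count. The paper counts types over a model $M$, which is automatically closed under $G$ and $J$ with $J$ onto, rather than closing up a countable set as you do; and, as a small point, in elementary extensions where SF fails a new $x$ can have $J(x)\in S_A$, which is harmless because $J(x)=a$ is itself a $\tau_S$-condition on $J(\check\gbar(x))$ and so is already recorded in $r$.
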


\begin{proof} By Lemma~\ref{3.3rev3}, for any model of $T(\jbar)$,
each quantifier-free type $\tau$-type of an element $d$ over  $D_M$ is determined by its restriction to
$\qtp_{\tau_S}(\tilde \gbar(d)/S_M)$.   Since the theory of $S_M$ is $\omega$-stable, there are only $|M|$
such types.
\end{proof}

Note that $\omega$-stability cannot  be deduced at this stage by the identical argument in the arithmetic case, because in that case each quantifier-free type of an element $d$ in $D$ is determined 
by the type of an infinite sequence from the variety $S(F_M)$.
  
\section{Almost Quasiminimal Excellence}\label{aqme}
As pointed out in \cite{BaldwinVill}, the proof of categoricity of covers in dimensions greater than 
one requires the use of otop-methods.  However, for
curves a minor variant on the approach in \cite{BaysZil, BHHKK}  suffices.

\begin{definition}
[Quasiminimal excellent geometries]\label{qmdef}
 Let $\bK$ be a class of $L$-structures such that $M \in \bK$ admits a
 closure relation $\cl_M$  mapping $X \subseteq M$ to $\cl_M(X) \subseteq M$
  that satisfies the following properties. We write $X \leq M$ if $\cl_M(X) = X$.

\smallskip
\begin{enumerate}
\item \textbf{Basic Conditions}
\begin{enumerate}
\item
 Each $\cl_M$ defines a pregeometry on $M$.
\item For each $X\subseteq M$, $\cl_M(X) \in \bK$.

 \item countable closure property (ccp): If $|X| \leq \aleph_0$ then  $|\cl(X)| \leq \aleph_0$.

\end{enumerate}

\item \textbf{Homogeneity}

\begin{enumerate}
\item 
A class $\bK$ of models has {\bf $\aleph_0$-homogeneity over $\emptyset$}  if the  models of $\bK$
are pairwise qf-back and forth equivalent (Definition~\ref{ps})

\item
A class $\bK$ of models has \textbf{$\aleph_0$-homogeneity over models}
if for any $G
 \in \bK$ with $G$ empty or a countable member of
$\bK$, any $H,H'$ with $G\leq H, G\leq H'$,  $H$ is qf-back and forth equivalent with $H'$ over $G$.
\end{enumerate}
\item $\bK$ is an {\em almost quasiminimal } if the universe of any model $H\in \bK$ is in $\cl(X)$ for any maximal $\cl$-independent set $X \subseteq H$.
\item We call a class   which satisfies  conditions 1) through 3)
an {\em almost quasiminimal excellent class (of geometries) } \cite{BHHKK}.
\end{enumerate}
\end{definition}  

`Almost' appears because in our case there are two interalgebraic `generic' types.
The key result \cite{Zilbercatex, BHHKK, BaldwinVill} is:

\begin{theorem}\label{aqsmsuff} If $\bK$ is a (class of) almost quasi-excellent geometries, then $\bK$ is categorical in all uncountable cardinalities.
\end{theorem}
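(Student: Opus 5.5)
The plan is to follow the Zilber / Bays–Hart–Hyttinen–Kesälä–Kirby strategy. The statement is cited from \cite{Zilbercatex, BHHKK, BaldwinVill}, so I would reconstruct its proof rather than invent a new one. The core is a uniqueness argument for models built over independent sets. Fix $H,H'\in\bK$ of the same uncountable cardinality $\kappa$ and choose maximal $\cl$-independent sets $X\subseteq H$, $X'\subseteq H'$. By almost quasiminimality (condition 3) we have $H=\cl(X)$ and $H'=\cl(X')$. Since $\cl$ is a pregeometry with the countable closure property, $|\cl(X)|\le |X|+\aleph_0$. Hence $|X|=|X'|=\kappa$, and we may fix a bijection $f_0:X\to X'$.

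The goal is to extend $f_0$ to an isomorphism $\cl(X)\cong\cl(X')$. First I would show that $f_0$ is a partial isomorphism. By $\aleph_0$-homogeneity over $\emptyset$ (2(a)), all models are qf back-and-forth equivalent. Independent tuples of the same length have the same quantifier-free type: extend partial maps one generic element at a time, noting that a generic over a closed countable set is unique up to qf-type, a fact which itself follows from 2(b). The `almost' clause enters exactly here. There may be finitely many interalgebraic generic types, so $f_0$ must be chosen to respect them on $X$; I would handle this by fixing one generic type and matching accordingly.

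Then I would build the isomorphism by induction, closing under $\cl$ along finite subsets $Y\subseteq X$. The key lemma, proved by induction on $|Y|$ in the style of \cite{BHHKK}, says that partial isomorphisms $\cl(Y)\to\cl(f_0Y)$ exist and can be chosen coherently. For $|Y|\le1$ this is 2(a) together with ccp. For larger $Y$, the step from $\cl(Y_0)$ to $\cl(Y_0y)$ uses 2(b), homogeneity over the countable model $\cl(Y_0)$: it extends the given isomorphism on $\cl(Y_0)$ to one on $\cl(Y_0 y)$.

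The main obstacle is coherence across the directed system of finite subsets. Maps defined on $\cl(Y_1)$ and on $\cl(Y_2)$ must agree on $\cl(Y_1\cap Y_2)$ and glue to a map on $\cl(Y_1\cup Y_2)$. Classically this is the excellence (amalgamation of $n$-dimensional independent systems) axiom. The result of \cite{BHHKK} that I would invoke shows that excellence follows from 1, 2(a) and 2(b) via their non-splitting characterization. With excellence in hand, a transfinite union over an enumeration of $X$ gives $H\cong H'$, which proves categoricity in every uncountable $\kappa$.
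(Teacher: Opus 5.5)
Your overall architecture follows the Zilber/BHHKK route that the paper cites: choose bases $X,X'$ of size $\kappa$ via ccp, fix a bijection $f_0$, extend it through closures of finite subsets, and discharge coherence by the Bays--Hart--Hyttinen--Kes\"al\"a--Kirby theorem that excellence is redundant. The paper does nothing more than cite this, so deferring the excellence step is not the problem.

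The genuine gap is in the step you treat as routine: that $f_0$ is a partial isomorphism, and likewise the case $|Y|=1$. Both need \emph{uniqueness of the generic type}: if $g\colon G\to G'$ is an isomorphism of countable closed (or empty) sets and $x\notin G$, $x'\notin G'$, then $g\cup\{(x,x')\}$ is a partial isomorphism.

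This does not follow from 2(b). Homogeneity over $G$ only lets you extend a partial isomorphism to points of $\cl(G\bar b)$; it says nothing about points outside. Take all structures in one unary predicate $P$ with $\cl(X)=X$. The pregeometry axioms, ccp, closure of the class under $\cl$, QM1, and homogeneity over $\emptyset$ and over closed sets all hold trivially, since anything in $\cl(G\bar b)$ already lies in $G\bar b$. Yet the models with $(|P|,|\neg P|)=(\aleph_1,\aleph_0)$ and $(\aleph_0,\aleph_1)$ are not isomorphic.

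The same problem affects $|Y|=1$. There 2(a) gives \emph{some} isomorphism $\cl_H(\emptyset)\cong\cl_{H'}(\emptyset)$, but nothing forces $y\mapsto f_0(y)$ to preserve type over it.

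In BHHKK, uniqueness of the generic is a separate axiom, QM4, distinct from QM5 (homogeneity over closed sets). The paper's 2(b) is the QM5-type condition; that is what Fact~\ref{sisosuff} characterizes by non-splitting. Their theorem is proved from QM1--QM5, not from 1, 2(a), 2(b) alone. It also uses QM1 ($\cl$ is determined by quantifier-free types), which you rely on tacitly when you say the extension of $\cl(Y_0)\to\cl(f_0Y_0)$ lands onto $\cl(f_0(Y_0y))$.

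In Zilber's original list, uniqueness of the generic was bundled into ``homogeneity over models.'' In that form it is simply false here, because the paper says there are two interalgebraic generic types. So a multi-type version must be assumed. It is not among conditions 1--3: condition 3 as written holds for every pregeometry, since a maximal independent set always spans. The needed hypothesis is: if $g\colon G\to G'$ is an isomorphism of countable closed sets, $x\notin G$, $x'\notin G'$, and $x,x'$ are of the same kind (e.g. $\qftp(x)=\qftp(x')$, in the application the same sort), then $g\cup\{(x,x')\}$ is a partial isomorphism; moreover there are finitely many kinds and any two are interalgebraic.

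Your patch of fixing one generic type is the right idea, but to turn it into a proof you must do three things:
\begin{enumerate}
\item show that every model has a basis consisting of realizations of the chosen type, which is where interalgebraicity enters;
\item have uniqueness for the relevant generic type(s) over every countable closed set, not just over $\cl(\emptyset)$;
\item check that the BHHKK argument survives replacing QM4 by this weaker multi-type version.
\end{enumerate}
That verification is exactly the ``minor variant'' of BHHKK the paper alludes to before stating the theorem. Your sketch asserts it rather than supplies it.
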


 \begin{definition}\label{cldef} For $(D,J, S(F)) \models T^\infty_{SF}(\jbar)$ and $X$ a substructure (recall $D_X = X\cap D$ and $S_X =X \cap S(F)$) we define,

$$\cl(X) = \acl( q^{-1}(\acl(q(
D_X))))  \cup \acl(S_X )$$
 where $\acl$  is the  algebraic
closure in the normal model theoretic sense.
\end{definition}

We need one further step, Theorem~\ref{goal}, to establish that the models of $T^\infty_{SF}(\jbar)$ are almost quasiminimal excellent.
The crux is to apply
Fact~\ref{sisosuff};  that requires a few preliminaries. Recall that (by definition) $p=\tp(\abar/M)$ splits over $A\subset M$
if there are $\bbar_1, \bbar_2 \in M$ realizing the same type over $A$ and a formula $\phi(\xbar,\ybar)$
with $\phi(\xbar,\bbar_1)\wedge \neg\phi(\xbar,\bbar_2) \in p$.

\begin{fact} \cite[Cor 5.5]{BHHKK} \label{sisosuff} If $\bK$ satisfies all conditions for `excellence' except
 $\aleph_0$-homogeneity over models,
the following are equivalent.
\begin{enumerate}
\item  If $M\leq_{\cl} N \in \bK$, $M$ is countable, 
and $\abar\in N- M$ then there is a finite subset $X$ of $M$ such that $\tp(\abar/M)$ does not split over $X$.
\item $\bK$ satisfies $\aleph_0$-homogeneity over models.
\end{enumerate}
\end{fact}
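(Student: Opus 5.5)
The plan is to prove both implications by combining the pregeometry with the quantifier-free back-and-forth, using the standing hypotheses: the basic conditions, $\aleph_0$-homogeneity over $\emptyset$, and almost quasiminimality. Since $M=\cl(M)$ and $\abar\notin M$, we may write $\abar=\abar_0\abar_1$. Here $\abar_0$ is $\cl$-independent over $M$, and $\abar_1\subseteq\cl(M\abar_0)$. Almost quasiminimality, together with the fact that there are only boundedly many generic types (here two interalgebraic ones), should give that the type of an independent tuple over a closed set is controlled by a finite amount of data. Both directions reduce to this.

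For $(2)\Rightarrow(1)$, I would argue by contradiction. Suppose $\tp(\abar/M)$ splits over every finite $X\subseteq M$. Fix a finite $X$ containing the finitely many parameters in $M$ needed to place $\abar_1$ in $\cl(X\abar_0)$ (by finite character of $\cl$). Let $G_0=\cl(X)\le M$, which is countable by ccp. A splitting witness is a pair $\bbar_1,\bbar_2\in M$ with the same type over $X$ but with $\abar\bbar_1$ and $\abar\bbar_2$ having different types over $X$.
\begin{enumerate}
\item Apply $\aleph_0$-homogeneity over the model $G_0$ to $\cl(G_0\bbar_1)$ and $\cl(G_0\bbar_2)$, both contained in $M$. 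This yields a partial map over $G_0$ sending $\bbar_1\mapsto\bbar_2$.
\item Extend this map, again using homogeneity over models, to $\abar$. Because $\abar_0$ is independent over $M$, hence over $G_0\bbar_i$, the extension can send $\abar_0$ to itself. Since $\abar_1\in\cl(X\abar_0)$ is then determined up to finitely many choices, enlarge $X$ at the start so that these choices are pinned down.
\item Conclude that $\tp(\abar\bbar_1/X)=\tp(\abar\bbar_2/X)$, which contradicts the splitting.
\end{enumerate}

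For $(1)\Rightarrow(2)$, let $G$ be countable with $G\le H$ and $G\le H'$. I would take the family of finite partial isomorphisms $f$ over $G$ between $H$ and $H'$ that preserve $\cl$-independence over $G$, and show it has the back-and-forth property. Given $f:\cbar\mapsto\cbar'$ and a new $a\in H$, use (1) applied to $G\le\cl(G\cbar a)$ to get a finite $X\subseteq G$ such that $\tp(\cbar a/G)$ does not split over $X$. Then use $\aleph_0$-homogeneity over $\emptyset$, applied to the pair $\cl(X\cbar)$ and $\cl(X\cbar')$, to find $a'\in H'$ with $\tp(\cbar a/X)=\tp(\cbar' a'/X)$. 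Non-splitting then propagates this equality to every finite $Y\subseteq G$: any finite $\ebar\subseteq G$ can be moved over $X$, and the non-splitting of both $\tp(\cbar a/G)$ and the corresponding type over $G$ in $H'$ (obtained symmetrically, with one common $X$ chosen at each step) gives $\tp(\cbar a\ebar/X)=\tp(\cbar' a'\ebar/X)$. This yields the required extension over $G$.

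The main obstacle is this last propagation step. One must ensure that the realization $a'$ found in $H'$ also has a type over $G$ that does not split over the same $X$. I would handle this by choosing $a'$ generic over $G$ whenever $a$ is generic over $G$, where the uniqueness of the generic type is what transfers non-splitting. When $a\in\cl(G\cbar)$, I would instead choose $a'$ from the finitely many conjugates in $\cl(G\cbar')$ using the countable closure property.
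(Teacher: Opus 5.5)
The paper gives no proof of this Fact; it is quoted from \cite[Cor 5.5]{BHHKK}. Your argument therefore has to stand on its own, and in both directions it breaks at the decisive step.

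In $(2)\Rightarrow(1)$, your splitting witnesses $\bbar_1,\bbar_2$ have the same type only over the finite set $X$, not over $G_0=\cl(X)$. So $\aleph_0$-homogeneity over the model $G_0$ gives no map fixing $G_0$ with $\bbar_1\mapsto\bbar_2$. Restricting to witnesses that do agree over $G_0$ would at best give non-splitting over the countable set $G_0$, which is not (1).

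Your handling of $\abar_1$ is also wrong. $\cl$ is a quasiminimal closure, not $\acl$, so $\abar_1\in\cl(X\abar_0)$ typically has infinitely many conjugates over $X\abar_0$. In this very paper $\cl$ contains whole fibres of $J$, which are infinite $\Gamma$-orbits. ``Enlarging $X$ to pin the choice down'' is circular: a larger $X$ simply comes with new witnesses, and whether the extension of $\bbar_1\mapsto\bbar_2$ can be made to fix $\abar_1$ is precisely the non-splitting you are trying to prove.

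The missing idea in this direction is a counting argument. By (2) and ccp, all types over a countable closed $M$ are realized in one countable $N^*\ge M$, so there are only countably many of them. By $\aleph_0$-homogeneity over $\emptyset$, tuples $\bbar_1\equiv_X\bbar_2$ in $M$ are conjugate under $\mathrm{Aut}(M/X)$. Hence the stabilizer of $p=\tp(\abar/M)$ is a closed subgroup of countable index in the Polish group $\mathrm{Aut}(M)$. It is therefore open, so it contains $\mathrm{Aut}(M/X)$ for some finite $X$, and that is exactly non-splitting over $X$.

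In $(1)\Rightarrow(2)$, the step you yourself flag as the main obstacle is the whole content, and it is not resolved. Non-splitting of both types over $X$ lets you transfer agreement from one $\ebar^*\in G$ to every $\ebar\in G$ with $\ebar\equiv_X\ebar^*$. But you know $\tp({\bf c}a\ebar/X)=\tp({\bf c}'a'\ebar/X)$ only for $\ebar$ empty, so nothing propagates. Moreover, whether $\tp({\bf c}'a'/G)$ splits over $X$ depends on $a'$, which you chose only by its type over $X$; so a ``common $X$'' cannot be fixed in advance.

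Your repairs do not close this gap.
\begin{enumerate}
\item Uniqueness of the generic type over a countable closed set is not among the hypotheses you listed (basic conditions, homogeneity over $\emptyset$, almost quasiminimality). Even granted, it handles $a$ generic over $\cl(G{\bf c})$ only after you have an isomorphism $\cl(G{\bf c})\cong\cl(G{\bf c}')$ over $G$, which is the conclusion again.
\item For $a\in\cl(G{\bf c})$, ccp gives countably many candidates $a'$, not finitely many. The candidates that are correct over larger and larger finite $Y\subseteq G$ form a decreasing chain of non-empty sets. Since there is no compactness in this setting, their intersection may well be empty.
\end{enumerate}
Showing that the non-splitting hypothesis forces a single candidate to be correct over all of $G$ is exactly the substance of the cited result, and your proposal does not supply it.
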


We need one observation to apply this method. Since an irreducible curve is strongly minimal we have:

\begin{fact}\label{sm} 
The induced first order theory on any irreducible curve is strongly minimal and so
categorical in all infinite cardinalities.
\end{fact}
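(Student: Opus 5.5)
The statement to prove is Fact~\ref{sm}: the induced first order structure on an irreducible curve is strongly minimal, hence categorical in all infinite cardinalities. I would reduce it to the strong minimality of algebraically closed fields and quantifier elimination for $ACF$. Concretely, let $C$ be an irreducible (projective) curve defined over a field $K_0 \subseteq F$ with $F \models ACF$. The induced structure on $C(F)$ consists of the traces on $C(F)^n$ of sets definable in $(F,+,\cdot)$ with parameters; this is exactly the setting of the sort $S(F)$ with the relations $\mathcal{R}$ of Notation~\ref{mcvocab}. Since $C$ is definable (after choosing affine charts), $C(F)$ is a definable set in $ACF$. By quantifier elimination every definable subset $X\subseteq C(F)$ is a finite Boolean combination of Zariski closed subsets of $C$. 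Because $C$ is irreducible of dimension one, each proper Zariski closed subset is finite. Hence $X$ is finite or cofinite. The bound on finite sets is uniform in parameters, since for $ACF$ the Morley degree/dimension computation gives an $n$ bounding the finite fibres in any definable family. The same holds in elementary extensions, so $C(F)$ is strongly minimal. If one prefers to avoid the uniformity discussion, this is simply the standard fact that $\dim C = 1$ and $C$ irreducible give Morley rank $1$ and degree $1$.

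For categoricity, a strongly minimal theory has $\acl$ as a pregeometry. Models are determined up to isomorphism by the dimension of a basis over the prime model, that is, over $\acl$ of the parameters (here $L$, which is named). A model of infinite cardinality $\kappa>\aleph_0$ has dimension $\kappa$. For the countable case one should read "infinite cardinalities" with the dimension condition built in, which is what $\Phi_\infty$ enforces: infinite transcendence degree. Concretely, $C(F)$ is interdefinable over $L$ with a finite cover of the field $F$, via a nonconstant rational function $C\to \mathbf{P}^1$ defined over $L$. So its isomorphism type is determined by the isomorphism type of $F$ over $L$, which is fixed by its transcendence degree.

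The only point needing care is the uniform finiteness in the definition of strong minimality. I would handle it by citing the elimination of $\exists^\infty$ in $ACF$ rather than reproving it.
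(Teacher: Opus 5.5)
Your argument is correct, and it is the standard argument behind this Fact, which the paper states without proof beyond noting that an irreducible curve is strongly minimal: quantifier elimination for ACF and the finiteness of proper Zariski closed subsets of an irreducible curve give strong minimality, and Baldwin--Lachlan then gives categoricity. You are also right that the literal claim fails in $\aleph_0$, since a strongly minimal theory is in general only uncountably categorical. So restricting the countable case to infinite dimension (as $\Phi_\infty$ does) is the correct reading, and in any case the paper only uses strong minimality, for homogeneity in Theorem~\ref{goal}.
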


 \begin{remark}\label{auto}[Automorphisms of single orbits] Note that for any
 $d\in D(\mathbb{M})$, its orbit $Gd$ is in the definable closure of $d$ (by the unary functions). But the automorphism groups of $\Gamma$ acts naturally on $\check g_i \Gamma$ for each $i\leq k$.  Thus, for any $M\leq \mathbb{M}$, $\aut(Gd/M) \cong Aut(\Gamma) \bigoplus  G/\Gamma$.
 \end{remark}                

Recall that in \cite{BHHKK} types are quantifier free types and that we established quantifier elimination for $T(\jbar)$ in
Theorem~\ref{infcomp}. The following remark will be used in the proof of our next theorem.

\begin{remark}\label{classify} Let $M$ and $N$ be models of $T^{\infty}_{SF}(\jbar)$ such that $M\leq_{\cl} N$.
Two elements $d,e \in N$ are related in one of the following ways. If $e$ is not in the $G$ orbit of $d$ then $r=\qtp_{\tau_S}(d\abar/M)$ {\em determines}
$\qtp_\tau (d \abar/M)$, because the action of $G$ cannot introduce any new information.
There are several possibilities if $e = gd$ for $g\in G$. If 
$g\in \Gamma$, of course, $\jbar(d) =\jbar(e)$. If $g \in \Gamma f \Gamma$, the $\Phi_f(\jbar(d),\jbar(e))= 0$ and, even more if $g$ and $e$ are in the same coset of $\Gamma_k$ in $G$,
then  $\jbar(d) =\jbar(e)$.
\end{remark}

The categoricity result is immediate from Theorem~\ref{aqsmsuff} and the following.

\begin{theorem}\label{goal} The models of $T^{\infty}_{SF}(\jbar)$, with $\cl$ defined as in \ref{cldef}, form an almost  quasiminimal excellent class.
\end{theorem}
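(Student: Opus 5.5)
We verify the conditions of Definition~\ref{qmdef} in turn. The only substantive one is $\aleph_0$-homogeneity over models, which we obtain from Fact~\ref{sisosuff} and the $\omega$-stability of Corollary~\ref{wstab}.

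\emph{Basic conditions.} We show $\cl$ of Definition~\ref{cldef} is a pregeometry. Monotonicity and idempotence follow from the corresponding properties of $\acl$ and from the fact that $q^{-1}$ of an algebraically closed subset of $S$ is closed under the $f_g$. Finite character holds because $\acl$ has finite character and each $x\in D$ meets only the finite data $J(\check\gbar(x))$ (Remark~\ref{key}). For exchange, we transfer the question to the field sort: $d\in\cl(Xc)-\cl(X)$ iff $J(d)$ is field-algebraic over $J(X)\cup\{J(c)\}$ but not over $J(X)$. This reduction uses Fact~\ref{FactCorrespondence}, which makes all $J(\check g_i d)$ interalgebraic with $J(d)$. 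Exchange for field-theoretic $\acl$ then gives exchange for $\cl$.

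For $\cl(X)\in\bK$, we check that $\cl(X)$ is closed under $J$ and the $f_g$. It satisfies $T(\jbar)$: the modular axioms $Mod^2$ hold because $q^{-1}$ of an algebraically closed field is taken in full, and $SF$ is inherited from the ambient model. The countable closure property holds because each fiber of $J$ is a single $\Gamma$-orbit by $SF$, hence countable, and $\acl$ of a countable set is countable. Almost quasiminimality holds because a maximal independent $X$ has $J(X)$ a transcendence basis of $F$ over $L$, so every point of $S(F)$, and by $SF$ every fiber, lies in $\cl(X)$. The term ``almost'' accounts for the two interalgebraic generic types, on $D$ and on $S$.

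\emph{Homogeneity.} $\aleph_0$-homogeneity over $\emptyset$ is exactly Theorem~\ref{infcomp}. For homogeneity over models we verify condition (1) of Fact~\ref{sisosuff}. Let $M\leq_{\cl}N$ with $M$ countable and $\abar\in N-M$. Since $M$ is closed, $J$ maps $D_M$ onto $S_M=S(F_M)$, so Assumption~\ref{context} holds for $A=M$, apart from finite generation, which is needed only for the back and forth. Write $\abar$ as points $\dbar$ of $D$ from distinct $G$-orbits together with points $\bbar$ of $S$ outside the $J(G\dbar)$. By Lemma~\ref{3.3rev3}, extended to tuples via Remark~\ref{classify}, the quantifier-free $\tau$-type of $\abar$ over $M$ is determined by the $\tau_S$-type $r$ of the finite field tuple $J(\check\gbar(\dbar))\bbar$ over $F_M$. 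Quantifier elimination (Theorem~\ref{infcomp}) upgrades this to the full type. Since $ACF$ is $\omega$-stable, $r$ is definable over a finite set $X_0\subseteq S_M$, namely the parameters of its minimal variety over the algebraically closed field $F_M$; in particular $r$ does not split over $X_0$. Let $X\subseteq D_M$ be finite with $J(X)\supseteq X_0$. Now suppose $\cbar_1,\cbar_2\in M$ have the same type over $X$ and $\phi(\xbar,\cbar_1)\wedge\neg\phi(\xbar,\cbar_2)\in\tp(\abar/M)$. By determination, $\phi$ is equivalent on realizations to a field formula in $J(\check\gbar(\xbar))$ and the $J(\check\gbar(\cbar_i))$, and these images have the same field type over $X_0$. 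This contradicts the non-splitting of $r$.

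\emph{Main obstacle.} The hardest step is the last transfer: showing that $\tau$-types over $M$ of parameters $\cbar_i$ that may lie in $D$ reduce faithfully to field types of $J(\check\gbar(\cbar_i))$. This needs determination to be uniform in the parameters. We would handle it by applying Lemma~\ref{3.3rev3} over $X\cup\{\cbar_i\}$ and using Remark~\ref{auto}: the orbit data $G\cbar_i$ is controlled by $\dcl$, so it carries no extra splitting. Fact~\ref{sisosuff} then yields $\aleph_0$-homogeneity over models, which completes the verification.
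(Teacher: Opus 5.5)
Your proposal is correct, and its core is the paper's argument. You reduce to one representative $d_i$ per $G$-orbit together with the remaining $S$-points. You use Lemma~\ref{3.3rev3} to make the $\tau$-type over $M$ a function of the $\tau_S$-type $r$ of $J(\check\gbar(\dbar))\bbar$. You get a finite set over which $r$ does not split from $\omega$-stability of the curve, and you apply Fact~\ref{sisosuff}.

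You differ from the paper only in the last transfer of non-splitting. The paper argues that splitting witnesses must lie in $S_M$. It then uses strong minimality (finite homogeneity) of $S_M$ to produce an automorphism $\alpha$ of $S_M$ over $X$ that moves one witness to the other and fixes $r$. The contradiction with determination comes from $p\neq\alpha(p)$, which implicitly lets an automorphism of the field sort act on $\tau$-types over $M$. You instead rewrite the splitting formula as a $\tau_S$-formula evaluated at $J(\check\gbar(\dbar))\bbar$ and at the images of the parameters, and contradict the non-splitting of $r$ directly. This needs no automorphisms and handles parameters from $D_M$ explicitly, through their $J(\check\gbar)$-images, rather than discarding them. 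The paper's route foregrounds homogeneity of the strongly minimal curve; yours is shorter and uses only the definition of splitting.

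The uniformity you single out as the main obstacle is in fact automatic. The proof of Lemma~\ref{3.3rev3} (see Notation~\ref{reduct}) is an atom-by-atom translation:
\begin{enumerate}
\item atoms linking $\abar$ to $M$ are false for every realization and every parameter, because $D_M=J^{-1}(S_M)$ is a union of $G$-orbits, or else they are $\tau_S$-atoms in the images;
\item atoms in the parameters alone take the same value at $\mathbf{c}_1$ and $\mathbf{c}_2$, since these have the same type over $X$.
\end{enumerate}
So you do not need to re-apply Lemma~\ref{3.3rev3} over $X\cup\mathbf{c}_i$ or appeal to Remark~\ref{auto}.

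The paper's proof takes the basic conditions as given. Your sketch of them works for the intended closure, but not for Definition~\ref{cldef} read literally, under which $S$-points do not pull back their fibres. For $d$ with $J(d)\notin\acl(\emptyset)$ one has $J(d)\in\cl(\{d\})\setminus\cl(\emptyset)$, yet $d\notin\cl(\{J(d)\})$: automorphisms acting on $Gd$ by the finite-index core of $\Gamma$ fix $S$ pointwise. So exchange fails there. What your transfer to the field sort actually proves is that $\{a:\pi(a)\in\acl(\pi(X))\}$ is a pregeometry, where $\pi$ is $J$ on $D$ and the identity on $S$. Likewise, for finite $X$ the closure has finite transcendence degree and so violates $\Phi_\infty$. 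Your verification that $\cl(X)\in\bK$ therefore only goes through if $\bK$ is read as containing such finite-dimensional closures.
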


\begin{proof} 
By \ref{3.3rev3} and \ref{sisosuff}: $1) \rightarrow 2)$, we need only show that for models $M\leq_{\cl} N$,  of $T^{\infty}_{SF}(\jbar)$ and
  any  finite tuple $\ebar\in N-M$  there is a finite subset $X$ of $M$ such that $\tp_\tau(\ebar/M)$ does not split over $X$. We rely on the reduction of types to the field sort in Section~\ref{reduce}. For any $d \in D_N$, write $\check \gbar(d)$ for $\langle \check g_1(d) \ldots \check g_k(d)\rangle$.

We modify $\ebar$ to 
 $\abar \in (N - M)$, written as $\abar_1\abar_2\abar_3$  as follows. Let $\abar'_1 \in D_N- D_M$ 
contain one $d_i$ for each of the, say, $r'$ orbits in  $D_N-D_M$ that intersect $\ebar$.

 For this we choose one component of $\ebar$ that is in $D_N $ for each
$G$-orbit $[e_i]$ that intersects $\ebar$ to form $\abar_1$.
We can make the restriction since any other element in $[e_i]$ is described as $g e_i $ for some $g\in G$.
Choose
 $\abar_2 \in S_N-S_M$  as  the tuple $\langle J(\check \gbar(d_i)):i<r\rangle$, perhaps extending $\ebar$ by adding images of some points in $\abar_1$.
 Finally, let
$\abar_3$ be    $((S_N-S_M) \cap \ebar)-\abar_2$ (with length $m$).

Observe that for any sequences $\fbar$ and $\hbar$  in $N-M$
and for any theory if $\qftp(\fbar/M)$ splits over a finite $X\subset M$, then
$\qftp(\fbar\hbar/M)$   
splits over $X$ by the same witness. Note that, by
Fact~\ref{FactCorrespondence},
there are no special
polynomial definable relations among the components of $\abar_2$ although there are on
each $J(\check \gbar(d_i))$. 

The key to the rest of the proof is that by the monotonicity under extensions of $\abar_2$ in Lemma~\ref{3.3rev3} and  since $\check \gbar(d_i) \subseteq \abar_2$,
we know
$\qftp_{\tau_S} (\abar_2\abar_3/M)$ determines $\qftp_{\tau}(d_i/M)$ for each $i$. We will show this is contradicted if $\qftp_\tau(\abar/M)$ splits over some finite
$X\subset M$.

Note
first that   $\qftp_\tau(\abar_1/M)$   cannot split
over any finite subset of $D_M$ because  each element of
$\abar_1$   has no relation to any element
of $D_M$ (as $M$ is a union of orbits) and there are only unary operations on $D_N$.   Similarly, each element of
$\abar_2\abar_3$ has no quantifier free relation with any element of $D_M$ since $M$
is closed under $J$. {\em Thus any witness to splitting must occur
in $S_M$.}

 By $\omega$-stability of $S_M$, choose finite $X$ such that $\qftp_{\tau_S}(\abar_2\abar_3/M)$ does not split over $X$. 
 We claim this non-splitting extends to $p=\qftp_{\tau}(\abar_1\abar_2\abar_3/M)$ not splitting over the same $X$.

 By Lemma~\ref{3.3rev3},
$\qftp_{\tau_S}(\abar_2\abar_3/M)$ determines $\qftp_{\tau}(d_i\abar_2\abar_3/M)$ for $1\leq i \leq r$. 
Moreover, since the $d_i$ are in distinct orbits, for all $g\in G$ and any $i,j$, $g d_i \neq d_j$, Further, $J(g d_i) = J(\check g_j(d_i))$ iff $g\in \check g_i \Gamma$.
Thus, $\qftp_{\tau_S}(\abar_2\abar_3/M)$ determines $\qftp_{\tau}(\abar_1\abar_2\abar_3/M)$; we now deduce that $ p=\qftp_{\tau}(\abar_1\abar_2\abar_3/M)$ does not split over $X$.  
 Write the variables of $p$ as $\ubar\vbar\wbar$, $\ubar$ has length $r$, $\vbar$ has length $r\times k$, indexed as
 $v_{i,j}$ for $1 \leq i \leq r$ and  $1 \leq j \leq k$, and $\wbar$ has length $m$. Recall $k=[G:\Gamma]$.

If $p=\qftp_\tau(\abar/M)$
  splits over $X$ there must be $\bbar_1$ and $ \bbar_2$ realizing the same type over $X$
and a formula $\theta(\ubar,\vbar,\wbar,\zbar)$ that
each of $\theta(\abar,\bbar_1 ) $ and $ \neg \theta(\abar,\bbar_2 ) $  satisfy $p$.
Since  $\qftp_{\tau_S}(\abar_2\abar_3/M)$ does not split over $X$, some $u_i$  must occur in $\theta$. More precisely, a formula
$J(\check g_i u_{\ell} ) = v_{\ell,i}$ must occur positively in $\theta$ for at least one $i\leq k$ and $\ell \leq r$.

By Lemma~\ref{sm}, $S_M$ is strongly minimal 
so finitely homogeneous \cite{BaldwinLachlansm}.
Thus, there is an automorphism $\alpha$
of $S_M$ fixing $X$ pointwise and taking $\bbar_1$ to $ \bbar_2$ and (by non-splitting)
fixing $\qftp_{\tau_S}(\abar_2\abar_3/M)$, and thus 
$\qftp_{\tau}(\abar_2\abar_3/M)$.  As, the only (neg) atomic formulas in $\tau -\tau_S$ with arguments
in both $\abar_1$ and $\abar_2\abar_3$
 assert for each $f\in D_M$, $J(f) \neq a_{2,j}$  and $J(f) \neq a_{3,\ell}$ because $M$ is closed under $q$. 

If $\theta$ gives opposite truth values depending on the choice of $
\bbar_i$, $p$ and $\alpha(p)$ are distinct types over $M$ that agree on $M\abar_2\abar_3$.
This contradicts our observation earlier in this proof that Lemma~\ref{3.3rev3} implies  $\qftp_{\tau_S}(\abar_2\abar_3/M)$ determines $\qftp_{\tau}(\abar_1\abar_2\abar_3/M)$. 
This completes the argument for Theorem~\ref{goal}.\end{proof}


\newcommand{\etalchar}[1]{$^{#1}$}

\end{document}